\newtheorem{theorem}{Theorem}[section]
\newtheorem{proposition}{Proposition}[section]
\newtheorem{lemma}[theorem]{ Lemma}
\title{Golod-Shafarevich algebras, free subalgebras and Noetherian images}
\author{Agata Smoktunowicz}
\date{ }
\begin{document}

\maketitle

\begin{abstract}
   It is shown that Golod-Shaferevich algebras of a reduced number of defining relations  contain noncommutative free subalgebras in two generators, and that these algebras can be homomorphically mapped  onto prime, Noetherian algebras with linear growth. It is also shown that Golod-Shafarevich algebras of a reduced number of relations cannot be nil.
       \end{abstract}

\noindent
{\em 2010 Mathematics subject classification:} 16P40, 16S15, 16W50, 16P90, 16R10,  16D25, 16N40, 16N20.

\noindent
{\em Key words:}
Golod-Shaferevich algebras, free subalgebras, Noetherian algebras, polynomial identity algebras,
  finitely presented algebras, Jacobson radical, nil rings.

\section*{Introduction}

In 1964, Golod and Shaferevich proved the Golod-Shafarevich theorem, and subsequently Golod constructed finitely generated  nil and  not nilpotent algebras \cite{gs, g}.
 Recall that an algebra is nil if every element to some power is zero, and that finitely generated nil algebras have no infinite-dimensional homomorphic images which are Noetherian, nor which satisfy a  polynomial identity \cite{lam}. Therefore, in general we cannot hope that Golod-Shafarevich algebras with an infinite number of defining relations can be mapped onto infinite dimensional Noetherian algebras, nor onto infinite dimensional algebras satisfying a polynomial identity.

   In this paper, we will show that the case where the number of defining relations is finite is different;
  namely, that the following result holds:

 \begin{theorem}
  Let $K$ be an algebraically closed field, and let $A$ be the free noncommutative algebra generated in degree one by elements $x, y$. Let $\xi $ be a natural number. Let
   $I$ denote the ideal generated in $A$ by homogeneous
    elements $f_{1}, f_{2}, \ldots , f_{\xi }\in A$. Suppose that there are exactly $r_{i}$
    elements among $f_{1}, f_{2}, \ldots , f_{\xi}$ with degrees larger than
 $2^{i}$ and not exceeding $2^{i+1}$. Assume that there are no elements among $f_{1}, f_{2}, \ldots , f_{\xi }$ with degree $k$ if
    $2^{n}+2^{n-1}+2^{n-2}<k<2^{n+1}+2^{n}$ for some $n$. Denote $Y=\{n:r_{n}\neq 0\}$. Suppose that for all $n\in Y$, $m\in
    \{0\}\cup Y$ with $m<n$ we have
 $$2^{3n+4}\prod_{i<n, i\in Y}r_{i}^{32}< r_{n}< 2^{2^{n-m-3}}.$$
 Then $A/I$ contains a free noncommutative graded subalgebra
    in two generators, and these generators are monomials of the same degree. In particular, $A/I$ is not Jacobson
    radical. Moreover, $A/I$ can be homomorphically mapped onto a
    graded, prime, Noetherian algebra with linear growth which satisfies a polynomial identity.
 \end{theorem}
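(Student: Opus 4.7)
The plan is to attack the three conclusions in sequence. As a first step I would establish quantitative lower bounds on the Hilbert series $H(t)=\sum_n \dim_K (A/I)_n\, t^n$ by a Golod-Shafarevich-type inequality of the form $H(t)\cdot\bigl(1-2t+\sum_i t^{d_i}\bigr) \geq 1$ coefficient-wise, where the $d_i$ run through the degrees of the defining relations. The hypothesis $2^{3n+4}\prod_{i<n,\,i\in Y} r_i^{32} < r_n$ appears engineered so that, despite the accumulation of many relations at degrees around $2^n$, the coefficients of $H(t)$ in the designated relation-free windows $k\in(2^n+2^{n-1}+2^{n-2},\,2^{n+1}+2^n)$ remain comparable to $2^k$. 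These ``gap'' windows contain no defining relations of $I$ by assumption, and hence $I_k$ is forced by the relations in strictly lower degrees only.

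To produce the free subalgebra I would choose two monomials $u,v\in A$ of a common degree $d$ selected so that every product of words in $u$ and $v$ of length up to any prescribed bound lands inside one of these gap windows. This should be possible because the gaps grow doubly exponentially while the ``dangerous'' degrees shrink in relative size. Inside such a window $I$ contributes nothing new to the graded component, so the $2^k$ words in $u,v$ of length $k$ will be linearly independent in $A/I$ as soon as $u,v$ themselves survive modulo $I$; the Hilbert-series estimate from the first step supplies enough independent monomials in every gap degree to pick such $u,v$, and the upper bound $r_n < 2^{2^{n-m-3}}$ leaves the required combinatorial slack. The existence of a free subalgebra then forces $A/I$ not to be Jacobson radical, since free algebras on $\geq 2$ generators are not radical.

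For the Noetherian image, I would enlarge $I$ to an ideal $J$ so that $A/J$ becomes a finitely generated module over a central polynomial subring $K[c]$ generated by a single word $c$ in $u$ and $v$; the natural target is a prime, linear-growth subring of $M_N(K[c])$ for some $N$, which is automatically Noetherian and PI. The extra relations would identify high-degree monomials in $u,v$ modulo central multiples of $c$, mimicking the structure of a module-finite extension of $K[t]$. The main obstacle I anticipate is showing this extension of $I$ is consistent, i.e.\ that $A/J$ is infinite-dimensional and still prime: one must verify that the extra identifications do not already lie in $I$ and do not force unwanted nilpotents. This is precisely where the delicate two-sided bound on $r_n$ should be crucial, the lower bound preventing collapse via the Golod-Shafarevich estimate of step one, the upper bound guaranteeing enough freedom to impose the identifications making $A/J$ module-finite over $K[c]$.
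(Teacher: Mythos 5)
Your proposal diverges from the paper's actual strategy in a way that leaves a genuine gap, principally in the free-subalgebra step.

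The paper does not work with $A/I$ directly. The central device is the construction of an intermediate homogeneous ideal $E$ with $I\subseteq E$, built from a decomposition $A(2^m)=U(2^m)\oplus V(2^m)$ satisfying properties 1--4 of Section 1 (following \cite{ls,sb}). The relations $f_1,\dots,f_\xi$ are swept into the $U$-parts via Sections 4--6, and Theorems 1.2 and 2.5 then establish the free-subalgebra and Noetherian-image conclusions for $A/E$; these pass back to $A/I$ only because $A/I$ surjects onto $A/E$, so freeness and the existence of a nice image are inherited upward. A Golod--Shafarevich Hilbert-series inequality of the kind you propose gives a \emph{numerical} lower bound on $\dim(A/I)_k$, which is far weaker than the \emph{structural} control the $U/V$-decomposition provides, and there is no way to extract a free subalgebra from dimension counting alone.

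Your gap-window mechanism for producing $u,v$ is also flawed on its own terms. First, ``$I$ contributes nothing new'' in a relation-free degree range is false: once a relation of degree $d_0$ exists, $I$ has nonzero components in every degree $\geq d_0$ via $AfA$. Second, and more fatally, if $u,v$ have common degree $d$, the products of length $k$ live in degree $kd$, an arithmetic progression, whereas your gap windows $(2^n+2^{n-1}+2^{n-2},\ 2^{n+1}+2^n)$ occupy a fixed positive proportion of each dyadic scale, with the relation windows $[2^n+2^{n-1},\ 2^n+2^{n-1}+2^{n-2}]$ occupying the complementary fraction. An arithmetic progression cannot avoid a set of positive density in each scale forever, so no choice of $d$ keeps all (or even most) products inside gaps. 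The paper's Theorem 1.2 circumvents this by padding $f(m_1,m_2)$ with a power $m_1^j$ so the total degree becomes a power of $2$, and then invoking $V(2^{m+1})=V(2^m)V(2^m)$, $\dim V(2^n)=2$, and $V(2^{m+2})\cap U(2^{m+2})=0$: the detection of nonmembership in $E$ happens only at power-of-2 degrees, which is what makes the argument robust against the arithmetic-progression obstruction.

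For the Noetherian image, your idea of passing to an enlarged ideal $J\supseteq I$ is the right instinct and parallels the paper's use of $E$. But the paper does not attempt to impose explicit identifications making $A/J$ module-finite over $K[c]$, and verifying consistency of your ad hoc relations would require essentially rebuilding the $U/V$ machinery. Instead the paper runs the $U/V$-construction a second time (Theorem 4.1, property 8, enforcing $\dim V(2^m)=1$ for large $m$), deduces a Capelli identity for $A/E$ from the resulting quadratic-growth bound (Lemmas 2.1--2.3), passes to a prime graded quotient (Lemma 2.4), and then applies Small--Stafford--Warfield and Small--Warfield to get linear growth and Noetherianity (Theorem 2.5). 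Your proposal would need, at minimum, the ideal $E$ and its dimension estimates to make the ``module-finite over $K[c]$'' step feasible; as stated it is a heuristic rather than a proof.
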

  For a more general result see Theorem $7.1$.
  The following related  question remains open: {\em Is there a finitely presented infinite dimensional nil algebra?} Note that, under the conditions of Theorem $0.1$, the answer to this question is negative. It was shown by Zelmanov that this question is strongly related to the Burnside problem for finitely presented groups, namely: {\em Are all finitely presented torsion groups finite?}
    It is also not known if the Jacobson radical of a finitely presented algebra is nil. Zelmanov also asked
    whether an algebra in d generators subject to $d^{2}\over 4$ relations can be mapped onto infinite dimensional polynomial identity algebras (with an affirmative answer possibly having applications within group theory \cite{zel}).
 This question is related to Theorem $0.1$ since, by the Small-Warfield result \cite{sw}, affine algebras with linear growth are finite dimensional modules over their centers, with these centers being Noetherian and containing
    subalgebras isomorphic to the polynomial ring $K[x]$.

 We recall the definition of the aforementioned Golod-Shafarevich theorem,
 see~\cite{gs, zel, sb}.
Golod-Shafarevich proved that if the series $(1-dt+\sum_{i=2}^{\infty }r_{i}t^{i})^{-1}$ has all coefficients nonnegative then all free algebras in d generators subject to arbitrary homogeneous relations $f_{1}, f_{2}, \ldots $ with $r_{i}$ relations of degree $i$, are infinite dimensional. In \cite{an2} Anick asked if the converse of the Golod-Shafarevich theorem is true i.e., if there is a finitely generated algebra in d generators subject to $r_{i}$ homogeneous  relations of degree $i$ for $i=2,3,\ldots $, provided that  the series $(1-dt+\sum_{i=2}^{\infty }r_{i}t^{i})^{-1}$
has a negative coefficient. This question is still generally open, however the case of relations of degree two is well understood \cite{wis}, and
 the complete solution in the case of quadratic semigroup relations was found in \cite{is}. Another open question, again related to Theorem $0.1$, is whether finitely presented algebras with exponential growth always contain free noncommutative subalgebras.
 Theorem $0.1$ shows that, under its assumptions, the answer is in the affirmative.
 Anick proved that finitely presented monomial algebras with exponential growth always contain
  free noncommutative subalgebras, and recently  Bell and Rogalski proved that quotients of affine domains
   with Gelfand-Kirillov dimension two over uncountable, algebraically closed fields contain free noncommutative subalgebras in two generators \cite{br}. An open question by Anick asks whether all division algebras of exponential growth contain free noncommutative subalgebras in two generators \cite{an1}. Related questions concerning Golod-Shafarevich groups have also been studied \cite{e, zel}.
 In particular, Zelmanov  proved that a pro-p group satisfying the Golod-Shafarevich condition contains a free non abelian pro p-group \cite{ze}.

\subsection{Notation}
In what follows, $K$ is a countable, algebraically closed field and
$A$ is the free $K$-algebra in two non-commuting indeterminates $x$
and $y$. By a graded algebra we mean an algebra graded by the additive semigroup of natural numbers.
The set of monomials in $\{x,y\}$ is denoted by $M$ and, for
each $k\geq 0$, its subset of monomials of degree $k$ is denoted by
$M(k)$.  Thus, $M(0)=\{1\}$ and for $k\geq1$ the elements in $M(k)$
are of the form $x_1\cdots x_k$ with all $x_i\in \{x, y\}$. The span
of $M(k)$ in $A$ is denoted by $A(k)$; its elements are called
\emph{homogenous polynomials of degree $k$}. More generally, for any
subset $X$ of $A$, we denote by $X(k)$ its subset of homogeneous
elements of degree $k$. The \emph{degree} $\deg f$ of an element $f \in A$ is the least
$k\ge0$ such that $f \in A(0) + \cdots + A(k)$. Any $f\in A$ can be
uniquely written in the form $f=f_0+f_1+\cdots+f_k$ with each $f_i\in
A(i)$. The elements $f_i$ are the \emph{homogeneous components} of
$f$.  A (right, left, two-sided) ideal of $A$ is \emph{homogeneous} if
it is spanned by its elements' homogeneous components.  If $V$ is a
linear space over $K$, we denote by $\dim V$ the dimension of $V$
over $K$.
 A graded $K$-algebra $R$ has a linear growth if there is a number $c$ such that $\dim R(n)\leq c$
 for all $n$. We say that a graded infinite-dimensional algebra $R$ has quadratic growth if there is a number $c$ such that
   $\dim R(n)\leq cn$
 for all $n$, and $R$ does not have linear growth. For more information about the growth of algebras, see \cite{kl}.
\section{General construction}
Let $K$ be a field and $A$ be a free $K$-algebra generated in degree one by two elements $x$, $y$.
 Suppose that subspaces $U(2^m),V(2^m)$ of $A(2^m)$ satisfy,
  for every $m\geq 1$, the following properties:
  \begin{itemize}
  \item[1.] $V(2^m)$ is spanned by monomials;\label{prop:3}
  \item[2.] $V(2^m)+U(2^m)=A(2^m)$ and $V(2^m)\cap U(2^m)=0$;
  \item[3.] $A(2^{m-1})U(2^{m-1})+U(2^{m-1})A(2^{m-1})\subseteq U(2^m)$;
  \item[4.] $V(2^m)\subseteq V(2^{m-1})V(2^{m-1})$, where for $m=0$ we set $V(2^{0})=Kx+Ky$, $U(2^{0})=0$.
  \end{itemize}
We define a graded subspace $ E$ of $A$
by constructing its homogeneous components $ E(k)$ as
follows. Given $k\in N$, let $n\in N$ be such that $2^{n-1}\leq
k<2^n$. Then $r\in E(k)$ precisely if, for all
$j\in\{0,\dots,2^{n+1}-k\}$, we have $A(j)rA(2^{n+1}-j-k)\subseteq
U(2^n)A(2^n)+A(2^n)U(2^n)$. More compactly,
\begin{equation}
   E(k)=\{r\in A(k)\mid ArA\cap A(2^{n+1})\subseteq U(2^n)A(2^n)+A(2^n)U(2^n)\}.
\end{equation}
Set then $ E=\bigoplus_{k\in N} E(k)$.

\begin{lemma}
  The set $ E$ is an ideal in $A$. Moreover, if all sets $V(2^{n})$ are nonzero, then
 algebra $A/ E$ is infinite dimensional over $K$.
\end{lemma}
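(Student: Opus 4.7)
The plan separates into verifying that $E$ is an ideal and that $A/E$ is infinite dimensional.

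For the ideal property, since $E$ is graded by construction, it is enough to check that $arb\in E(k')$ for every $r\in E(k)$ and every homogeneous $a,b\in A$, where $k'=|a|+k+|b|$. Fix $n,N$ with $2^{n-1}\le k<2^n$ and $2^{N-1}\le k'<2^N$, and take $\gamma,\delta\in A$ with $|\gamma|+k'+|\delta|=2^{N+1}$. Setting $\alpha=\gamma a$ and $\beta=b\delta$, the aim is to show $\alpha r\beta\in U(2^N)A(2^N)+A(2^N)U(2^N)$. I would split $\alpha=\alpha'\alpha''$ and $\beta=\beta'\beta''$ (via the tensor isomorphism $A(|\alpha|)\cong A(|\alpha'|)\otimes A(|\alpha''|)$) with $|\alpha''|+k+|\beta'|=2^{n+1}$, so that $c_0:=\alpha''r\beta'$ lies in $U(2^n)A(2^n)+A(2^n)U(2^n)$ by the definition of $E(k)$. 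The residual task is to promote $c_0$ to an element of $U(2^N)A(2^N)+A(2^N)U(2^N)$ after multiplying by $\alpha'$ on the left and $\beta''$ on the right.

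The key technical tool is the following padding step: if $c\in U(2^m)A(2^m)+A(2^m)U(2^m)$ has degree $2^{m+1}$, and $\xi,\eta\in A$ satisfy $|\xi|+2^{m+1}+|\eta|=2^{m+2}$ with $|\xi|\in\{0,2^m,2^{m+1}\}$, then $\xi c\eta\in U(2^{m+1})A(2^{m+1})+A(2^{m+1})U(2^{m+1})$. This is immediate from property 3 after a case split on $|\xi|$. Iterating over $m=n,n+1,\ldots,N-1$ requires splitting $\alpha'$ and $\beta''$ into pieces of degrees in $\{0,2^m,2^{m+1}\}$ summing respectively to $|\alpha'|$ and $|\beta''|$. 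The only arithmetic subtlety is that $|\alpha'|$ must be a multiple of $2^n$; I would arrange this by choosing $|\alpha''|\in[0,2^{n+1}-k]$ congruent to $|\alpha|$ modulo $2^n$, and a short case analysis on whether $|\alpha|$ or $|\beta|$ is smaller than $2^{n+1}-k$ confirms such a choice is always possible. I expect this bookkeeping to be the main obstacle.

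For the infinite-dimensionality claim, I would prove that $V(2^n)\not\subseteq E(2^n)$ for every $n\ge 0$; this gives $(A/E)(2^n)\neq 0$ for infinitely many $n$. Suppose for contradiction that $V(2^n)\subseteq E(2^n)$ for some $n$. Because $A$ is free, multiplication induces an isomorphism $A(2^{n+1})\otimes A(2^{n+1})\cong A(2^{n+2})$; combined with $A(2^{n+1})=V(2^{n+1})\oplus U(2^{n+1})$, this yields the direct sum decomposition
\[
A(2^{n+2})=V(2^{n+1})V(2^{n+1})\;\oplus\;\bigl(U(2^{n+1})A(2^{n+1})+A(2^{n+1})U(2^{n+1})\bigr).
\]
For any $v\in V(2^n)$ and any $b=b_1b_2$ with $|b_1|=2^n$, $|b_2|=2^{n+1}$, the projection of $vb_1b_2$ onto the first summand equals $(vb_1)^V\cdot b_2^V$, where $(\,\cdot\,)^V$ denotes the component in $V(2^{n+1})$. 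Choosing $b_2\in V(2^{n+1})\setminus\{0\}$ (possible by hypothesis) forces $(vb_1)^V=0$ for every $b_1\in A(2^n)$, hence $vA(2^n)\subseteq U(2^{n+1})$. Since $v\in V(2^n)$ was arbitrary, $V(2^n)V(2^n)\subseteq U(2^{n+1})$; property 4 then gives $V(2^{n+1})\subseteq U(2^{n+1})$, and property 2 forces $V(2^{n+1})=0$, contradicting the hypothesis.
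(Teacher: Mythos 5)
Your argument for the ideal property follows the same template as the paper's (which simply defers to Theorem~5 of \cite{ls}): telescope from the scale-$2^n$ window to the scale-$2^N$ window by repeatedly multiplying on each side by factors of degree in $\{0,2^m,2^{m+1}\}$ and invoking only property~3. The padding step you isolate is correct, and the residue-class bookkeeping you flag as the main obstacle does go through: one needs $|\alpha''|\equiv|\alpha|\pmod{2^n}$ in the window $[\max(0,2^{n+1}-k-|\beta|),\min(|\alpha|,2^{n+1}-k)]$, which has length at least $2^n$ except in the degenerate case $N=n$ where no padding is needed at all, and the remaining $|\alpha'|$, being a multiple of $2^n$ in $[0,2^{N+1}-2^{n+1}]$, always admits a digit expansion $\sum_{m=n}^{N-1}a_m2^m$ with $a_m\in\{0,1,2\}$.

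For infinite-dimensionality your route is genuinely different from the paper's. The paper takes $r\in V(2^n)\subseteq E$, uses $rA(3\cdot2^n)\subseteq U(2^{n+1})A(2^{n+1})+A(2^{n+1})U(2^{n+1})\subseteq U(2^{n+2})$ to get $V(2^n)^4\subseteq U(2^{n+2})$, and then property~4 plus property~2 force $V(2^{n+2})=0$. You instead exploit that $A$ is free: the decomposition $A(2^{n+2})=V(2^{n+1})V(2^{n+1})\oplus\bigl(U(2^{n+1})A(2^{n+1})+A(2^{n+1})U(2^{n+1})\bigr)$ lets you read off the $V\otimes V$-component of $vb_1b_2$ as $(vb_1)^V b_2^V$; choosing a nonzero $b_2\in V(2^{n+1})$ then kills $(vb_1)^V$ outright, giving the sharper $V(2^n)A(2^n)\subseteq U(2^{n+1})$ and hence $V(2^{n+1})=0$. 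Both proofs are correct; the paper's is shorter and uses only the containment from property~3, while yours exploits the tensor structure to land one step lower ($n+1$ instead of $n+2$) and needs only the nonvanishing of $V(2^{n+1})$ rather than $V(2^{n+2})$.
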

\begin{proof} The proof of the first claim is the same as in Theorem $5$ in \cite{ls} ($A(n)$ is denoted as $H(n)$ in \cite{ls}). Notice that we only need property 3 to prove that $E$ is an ideal.

  Regarding the second claim, suppose on the contrary that $A/E$ is a finite-dimensional algebra.
 By the definition of $E$, we see that $A/E$ is a graded algebra, hence $V(2^n)\subseteq  E$ for some $n$.
 Let $r\in V(2^{n})\subseteq E$, then by the definition of $E$ we get that
  $rA(2^{n+2}-2^{n})\subseteq U(2^{n+2})$. Since $V(2^{n})^{3}\subseteq A(2^{n+2}-2^{n})$, it follows that $V(2^{n})^{4}\subseteq U(2^{n+2})$.
   By property $4$, $V(2^{n+2})\subseteq V(2^{n})^{4}$, hence
 $V(2^{n+2})\subseteq U(2^{n+2})$. By property $2$, $U(2^{n+2})\cap V(2^{n+1})=0$, a contradiction,
 hence $A/E$ is infinite dimensional over $K$.
\end{proof}
 We now prove the following theorem.
\begin{theorem} Let $K$ be a field and $A$ be a free $K$-algebra generated in degree one by two elements $x, y$.
  Suppose that subspaces $U(2^m),V(2^m)$ of $A(2^m)$ satisfy properties 1-4 above,
  and moreover that  there is $n$ such that $\dim V(2^{n})=2$ and $V(2^{m+1})=V(2^{m})V(2^{m})$ for all $m\geq n$.
  Then, the algebra $A/E$ contains a free noncommutative algebra in $2$ generators, and these generators are monomials of the same degree.
\end{theorem}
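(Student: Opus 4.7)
The plan is to exhibit explicit free generators: take $a$ and $b$ to be the two monomials that span $V(2^n)$ (such monomials exist by property 1 combined with $\dim V(2^n)=2$). To show that these generate a free subalgebra of $A/E$, it suffices, since $E$ is graded, to verify that every nonzero homogeneous polynomial $f$ of positive degree in two noncommuting free variables fails to lie in $E$ after evaluation at $a,b$.

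First, induction on $j$, using the strengthened equality $V(2^{m+1})=V(2^m)V(2^m)$ (valid for $m\ge n$) together with property 1, yields that $V(2^{n+j})$ has as a basis the $2^{2^j}$ distinct words of length $2^j$ in the alphabet $\{a,b\}$. Distinctness as monomials in $A$ is automatic: since $a\ne b$ are monomials of the same $x,y$-degree $2^n$, different concatenations in $\{a,b\}$ produce different words in $x,y$.

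Given a nonzero homogeneous $f$ of degree $k$ in the two free variables (hence of $A$-degree $k\cdot 2^n$), choose $l$ large enough that $2^l\ge k$ and pass to $g=f\cdot a^{2^l-k}$. Since $E$ is an ideal by the preceding lemma, assuming $f\in E$ gives $g\in E(2^{l+n})$, and the definition of $E$ then forces
\[
AgA\cap A(2^{l+n+2})\ \subseteq\ U(2^{l+n+1})A(2^{l+n+1})+A(2^{l+n+1})U(2^{l+n+1}).
\]
By property 3 applied at level $m=l+n+2$, this right-hand side is contained in $U(2^{l+n+2})$. Applied now to the test element $g\cdot a^{3\cdot 2^l}=f\cdot a^{4\cdot 2^l-k}$, which is a nonzero linear combination of pairwise distinct length-$2^{l+2}$ words in $\{a,b\}$ and hence a nonzero element of $V(2^{l+n+2})$, this contradicts property 2.

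The main obstacle is just to find the right degree shift so that the defining condition of $E$ can be invoked at a level where $V(\cdot)$ and $U(\cdot)$ are directly complementary: multiplying by $a^{2^l-k}$ pushes us to the convenient degree $2^{l+n}$, and the further multiplication by $a^{3\cdot 2^l}$ lands the test element inside $V(2^{l+n+2})$; after that, property 2 closes the argument mechanically, and the generators $a,b$ are monomials of the common degree $2^n$ as required.
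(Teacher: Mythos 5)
Your proof is correct and follows essentially the same strategy as the paper's: exhibit the two monomials spanning $V(2^n)$, multiply a putative element of $E$ by a suitable power of one generator so that the result lands in a $V(2^{m})$ with $m$ chosen so that the definition of $E$ together with property~3 would force it into $U(2^{m})$, then contradict property~2. The only cosmetic difference is that you take two steps (first to degree $2^{l+n}$, then to $2^{l+n+2}$) where the paper computes the single exponent $j$ directly; you are also slightly more careful than the paper in distinguishing the roles of properties~2 and~3 in the final contradiction.
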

\begin{proof} Let $V(2^{n})=Km_{1}+Km_{2}$ for some monomials $m_{1}, m_{2}\in A(2^{n})$.
 We will show that images of $m_{1}$ and $m_{2}$ generate a free noncommutative subalgebra in $A/E$.
 Recall that $E$ is a graded ideal; therefore, it is sufficient to show that if $f(X,Y)\in K[X,Y]$ is a homogeneous polynomial in two noncommuting variables $X, Y$,
 then $f(m_{1}, m_{2})\notin E$. Notice that $f(m_{1}, m_{2})\in A(t)$ for some $t$ divisible by $2^{n}$. Let $m$ be such that  $2^{m}\leq t< 2^{m+1}$ and let $j={{2^{m+2}-t}\over 2^{n}}$.
  Observe that $f(m_{1}, m_{2})\in V(2^{n})^{t\over 2^{n}}$, since $m_{1}, m_{2}\subseteq V(2^{n})$, and so $f(m_{1}, m_{2}){m_{1}}^{j}\in V(2^{n})^{2^{m+2-n}}$. Observe that $V(2^{n})^{2^{m+2-n}}=V(2^{m+2})$, since by assumption $V(2^{m+1})=V(2^{m})V(2^{m})$ for all $m\geq n$. By property $3$, we get  $U(2^{m+2})\cap V(2^{m+2})=0$. Therefore, $f(m_{1}, m_{2}){m_{1}}^{j}\notin U(2^{m+2})$, and so $f(m_{1}, m_{2})\notin E$, as required, by the definition of $E$. This completes the proof.
\end{proof}
\begin{theorem}
Let $K$ be a field and $A$ be a free $K$-algebra generated in degree one by two elements $x, y$.
  Suppose that subspaces $U(2^m),V(2^m)$ of $A(2^m)$ satisfy properties 1-4 above and moreover that
  there is $\alpha $ such that $\dim V(2^{m})=1$ for all $m\geq \alpha $. Then $V(2^{m+1})=V(2^{m})V(2^{m})$
  and $U(2^{m+1})=U(2^{m})A(2^{m})+A(2^{m})U(2^{m})$ for all $m\geq n$.
\end{theorem}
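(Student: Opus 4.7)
The plan is to verify the two equalities in succession, working for $m\geq\alpha$ (the index written $n$ in the conclusion appears to be a typographical slip for $\alpha$). Both claims will reduce to basic linear algebra once the right decomposition of $A(2^{m+1})$ is in hand.

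For the first equality, I would argue by dimension. Since $\dim V(2^m)=1$, writing $V(2^m)=Kv_m$ gives $V(2^m)V(2^m)=Kv_mv_m$, which has dimension at most one. Property 4 provides the inclusion $V(2^{m+1})\subseteq V(2^m)V(2^m)$, and by hypothesis $\dim V(2^{m+1})=1$, so the two subspaces must coincide. (In particular $v_mv_m\neq 0$, which is automatic anyway since $A$ is a free algebra on monomials.)

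For the second equality, I would start from the free-algebra identity $A(2^{m+1})=A(2^m)A(2^m)$ together with the decomposition $A(2^m)=V(2^m)+U(2^m)$ from property 2. Expanding the product gives
$$A(2^{m+1})=V(2^m)V(2^m)+V(2^m)U(2^m)+U(2^m)V(2^m)+U(2^m)U(2^m).$$
The last three summands lie inside $A(2^m)U(2^m)+U(2^m)A(2^m)$, which by property 3 is contained in $U(2^{m+1})$. Combining this with the first claim yields
$$A(2^{m+1})=V(2^{m+1})+\bigl[A(2^m)U(2^m)+U(2^m)A(2^m)\bigr].$$

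The remaining step is to upgrade this sum to an equality $U(2^{m+1})=A(2^m)U(2^m)+U(2^m)A(2^m)$. Given $u\in U(2^{m+1})$, one can write $u=v+w$ with $v\in V(2^{m+1})$ and $w\in A(2^m)U(2^m)+U(2^m)A(2^m)\subseteq U(2^{m+1})$. Then $v=u-w$ sits in $V(2^{m+1})\cap U(2^{m+1})$, so property 2 forces $v=0$ and hence $u=w$, giving one inclusion; property 3 gives the other. I do not expect any genuine obstacle here: the whole argument is a dimension count plus an elementary direct-sum chase, with property 3 supplying the single nontrivial containment and property 2 doing the bookkeeping that pins $U(2^{m+1})$ down exactly.
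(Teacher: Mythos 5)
Your proposal is correct and follows essentially the same route as the paper: property 4 plus the one-dimensionality assumption give $V(2^{m+1})=V(2^m)V(2^m)$, and then the decomposition $A(2^m)=U(2^m)\oplus V(2^m)$ together with property 3 shows $T=U(2^m)A(2^m)+A(2^m)U(2^m)$ is a complement to $V(2^{m+1})$ inside $U(2^{m+1})$. The only cosmetic difference is the final step, where the paper concludes by comparing dimensions while you run an explicit element chase through $V(2^{m+1})\cap U(2^{m+1})=0$; these are the same argument.
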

\begin{proof}
Observe that by property $4$, $V(2^{m+1})\subseteq V(2^{m})V(2^{m})$, and by assumption $\dim V(2^{m+1})=\dim V(2^{m})V(2^{m})$ for all $m\geq \alpha $. Therefore, $V(2^{m+1})= V(2^{m})V(2^{m})$.

Fix $m\geq \alpha $. We will now prove that $U(2^{m+1})= T$ where $T=U(2^{m})A(2^{m})+A(2^{m})U(2^{m})$. Notice that $T\subseteq U(2^{m+1})$ by property 3. On the other hand, $V(2^{m+1})=V(2^{m})V(2^{m})$ and property $2$ imply that $T+V(2^{m+1})=A(2^{m+1})$.
 Therefore, $\dim T=\dim A(2^{m+1})-\dim V(2^{m+1})$. By property $2$, $\dim U(2^{m+1})=\dim A(2^{m+1})-\dim V(2^{m+1})$, so $\dim T=\dim U(2^{m+1})$. Since $T\subseteq U(2^{m+1})$, it follows that $U(2^{m+1})=T$.
 \end{proof}
We will now review some concepts introduced in \cite{ls}.
  We will adhere to the notation used in \cite{sb}.

We extend the definition of $U(2^n),V(2^n)$ to dimensions that are
not powers of $2$. In ~\cite{ls}, Section $4$ the sets~(\ref{def:U<}--\ref{def:V>}) are named
respectively $S, W, R$ and $Q$.

Let $k\in N$ be given. We write it as a sum of increasing powers of $2$,
namely $k=\sum_{i=1}^t 2^{p_i}$ with $0\leq p_1 < p_2 < \ldots <
p_t$. Set then
\begin{align}
  U^<(k) &= \sum_{i=0}^t A(2^{p_1}+\cdots+2^{p_{i-1}})U(2^{p_i})A(2^{p_{i+1}}+\cdots+2^{p_t}),\label{def:U<}\\
  V^<(k) &= V(2^{p_1})\cdots V(2^{p_t}),\\
  U^>(k) &= \sum_{i=0}^t A(2^{p_t}+\cdots+2^{p_{i+1}})U(2^{p_i})A(2^{p_{i-1}}+\cdots+2^{p_1}),\\
  V^>(k) &= V(2^{p_t})\cdots V(2^{p_1}).\label{def:V>}
\end{align}
\begin{lemma}[\cite{ls}, pp. 993--994]
  For all $k\in N$ we have $A(k)=U^<(k)\oplus V^<(k)=U^>(k)\oplus
  V^>(k)$.

  For all $k,\ell\in N$ we have $A(k)U^<(\ell)\subseteq U^<(k+\ell)$ and $U^>(k)A(\ell)\subseteq U^>(k+\ell)$.
\end{lemma}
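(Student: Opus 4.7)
The first assertion, that $A(k) = U^<(k) \oplus V^<(k)$, I would prove by induction on $t$, the number of binary digits of $k$. Writing $k = 2^{p_1} + \cdots + 2^{p_t}$, freeness of $A$ yields the $K$-vector space isomorphism $A(k) \cong A(2^{p_1}) \otimes_K \cdots \otimes_K A(2^{p_t})$ implemented by multiplication, because every length-$k$ monomial factors uniquely as a concatenation of monomials of the prescribed block sizes. Substituting $A(2^{p_i}) = U(2^{p_i}) \oplus V(2^{p_i})$ from property~2 into each factor and distributing yields
\[
  A(k) \;=\; \bigoplus_{S \subseteq \{1,\dots,t\}} W_S,
\]
where $W_S$ is the product whose $i$-th factor is $V(2^{p_i})$ if $i \notin S$ and $U(2^{p_i})$ if $i \in S$. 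The term $S = \emptyset$ is exactly $V^<(k)$, while any term with $S \neq \emptyset$ is contained in the summand of the defining expression for $U^<(k)$ indexed by any chosen $i \in S$. Hence $U^<(k) = \bigoplus_{S \neq \emptyset} W_S$ is a direct complement of $V^<(k)$. The $U^>, V^>$ case is identical after reversing the order of the tensor factors.

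For the second assertion I would induct on the number of binary digits of $\ell$. If $\ell = 0$ there is nothing to show. Otherwise let $p$ be the least binary exponent of $\ell$ and write $\ell = 2^p + \ell'$ with $\ell'$ having one fewer binary digit. Factoring $A(2^p)$ out of the $i \geq 2$ summands in the definition of $U^<(\ell)$ and using $A(2^p) = U(2^p) \oplus V(2^p)$ gives the recursion
\[
  U^<(\ell) \;=\; U(2^p)\, A(\ell') \;+\; V(2^p)\, U^<(\ell').
\]
Hence $A(k) U^<(\ell) = A(k) U(2^p) A(\ell') + A(k) V(2^p) U^<(\ell')$. The second summand sits inside $A(k + 2^p)\, U^<(\ell')$, which by the inductive hypothesis lies in $U^<(k + 2^p + \ell') = U^<(k+\ell)$.

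The first summand $A(k) U(2^p) A(\ell')$ is where the work concentrates, and this is the step I expect to be the main obstacle. The essential device is property~3 in its absorption form $A(2^m) U(2^m) + U(2^m) A(2^m) \subseteq U(2^{m+1})$, which allows one to swallow a same-sized $A$-block adjacent to $U(2^m)$ and raise the index of the $U$-factor by one. By splitting $A(a) = A(a - 2^m) A(2^m)$ (or $A(2^m) A(a - 2^m)$) whenever $a \geq 2^m$ and iterating, one performs absorptions alternately on the left (shrinking $k$) and on the right (shrinking $\ell'$), mirroring precisely the cascade of binary carries arising in the addition $k + 2^p + \ell'$. A case analysis governed by which side contributes the next carry then identifies the resulting expression with one of the defining summands $A(2^{q_1} + \cdots + 2^{q_{i-1}})\, U(2^{q_i})\, A(2^{q_{i+1}} + \cdots + 2^{q_u})$ of $U^<(k+\ell)$. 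The dual inclusion $U^>(k) A(\ell) \subseteq U^>(k+\ell)$ then follows by running the entire argument in the opposite algebra. The full carry-tracking bookkeeping is carried out in \cite{ls}, pp.~993--994.
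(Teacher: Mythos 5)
The paper gives no proof of this lemma at all---it simply cites \cite{ls}, pp.~993--994---so there is no in-paper argument to compare your proposal against; you are reconstructing the proof from scratch.

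Your proof of the first assertion is correct. The factorization $A(k) = A(2^{p_1})\cdots A(2^{p_t})$, viewing $A(k)$ as a tensor product of the blocks, together with $A(2^{p_i}) = U(2^{p_i})\oplus V(2^{p_i})$ and distributivity, gives $A(k) = \bigoplus_S W_S$, with $W_\emptyset = V^<(k)$ and (expanding each summand of $U^<(k)$ the same way) $U^<(k) = \bigoplus_{S\neq\emptyset} W_S$. You only wrote one inclusion explicitly, but the reverse is immediate and I would not count that as a gap. The $>$-case is indeed the same with blocks in descending order.

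For the second assertion, the recursion $U^<(\ell)=U(2^p)A(\ell')+V(2^p)U^<(\ell')$ and the disposal of $A(k)V(2^p)U^<(\ell')$ by induction are both fine. But your description of the hard term $A(k)U(2^p)A(\ell')$ as being handled by absorptions ``alternately on the left and on the right'' glosses over the one point at which the argument can genuinely break. The correct invariant is that the block sitting to the \emph{right} of the $U$-factor always has every binary digit at or above the current $U$-exponent; to preserve it, when passing from $U(2^m)$ to $U(2^{m+1})$ one must absorb from the right whenever the right block has a digit at exactly $m$, and only when it does not may one absorb from the left. Doing these in the wrong order is not merely inefficient---it can dead-end. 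For instance, starting from $A(2^{p+1}+2^p)\,U(2^p)\,A(2^{p+1})$ and absorbing twice on the left yields $U(2^{p+2})A(2^{p+1})$, which is \emph{not} contained in $U^<(2^{p+2}+2^{p+1}) = U(2^{p+1})A(2^{p+2})+A(2^{p+1})U(2^{p+2})$; one must absorb left once and then right once to land in the summand $A(2^{p+1})U(2^{p+2})$. Since you explicitly defer the bookkeeping to \cite{ls} this is not an error in what you wrote, but a complete proof needs to state and maintain this invariant rather than leave the order of absorptions to a vague ``alternately''; once it is in place, a straightforward induction on the total degree of the two outer $A$-blocks finishes the argument.
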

\begin{proposition}[Theorem 11,\cite{ls}]
  For every $k\in N$ we have
  \[\dim A(k)/ E(k)\leq \sum_{j=0}^k\dim V^<(k-j)\dim V^>(j),\]
 where we set $\dim V^{>}(0)=\dim V^{<}(0)=1$.
\end{proposition}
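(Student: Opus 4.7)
The plan is to prove the equivalent statement that the multiplication map
$$\Psi\colon \bigoplus_{j=0}^{k} V^<(k-j)\otimes V^>(j) \longrightarrow A(k)/E(k),\qquad v\otimes w \mapsto vw+E(k),$$
is surjective. Surjectivity immediately yields the desired bound by subadditivity of dimension, since $\dim(\operatorname{image}\Psi)\le\sum_{j=0}^{k}\dim V^<(k-j)\,\dim V^>(j)$. The task thus reduces to establishing the set-theoretic equality
$$A(k)\;=\;E(k)\;+\;\sum_{j=0}^{k} V^<(k-j)\, V^>(j).$$

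To prove this equality, I would take $r \in A(k)$ and apply the decomposition $A(k)=U^>(k)\oplus V^>(k)$ from the preceding Lemma to write $r = v_0 + r_1$ with $v_0\in V^>(k)$ and $r_1\in U^>(k)$. Since $V^<(0)=K$ by convention, the component $v_0$ already lies in the $j=k$ summand on the right-hand side. Unwinding the definition of $U^>(k)$, the remaining piece $r_1$ is a sum of elementary terms $\alpha\,w\,\beta$, where $w\in U(2^{p_i})$ sits at an interior position of the binary expansion $k=\sum_{i}2^{p_i}$. For each such term I would split $\alpha$ using $U^>/V^>$ and $\beta$ using $U^</V^<$; the four resulting pieces are then sorted, with the "all-$V$" piece contributing to an appropriate $V^<(k-j)V^>(j)$ summand, and the three pieces still carrying a $U$-factor to be absorbed into $E(k)$.

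The principal obstacle is the absorption into $E(k)$: one must show that such a residual piece $\alpha'w\beta'$, still carrying an interior $U(2^{p_i})$-factor, really satisfies the defining inclusion of $E(k)$, namely that for every $a\in A(j')$ and $b\in A(2^{n+1}-j'-k)$ (with $2^{n-1}\le k<2^n$), the degree-$2^{n+1}$ product $a\alpha' w\beta' b$ lies not merely in $U(2^{n+1})$ but in the strictly smaller target $U(2^n)A(2^n)+A(2^n)U(2^n)$. The essential device is iterated application of property~3, $A(2^{m-1})U(2^{m-1})+U(2^{m-1})A(2^{m-1})\subseteq U(2^m)$, combined with the multiplicative containments $A(k)U^<(\ell)\subseteq U^<(k+\ell)$ and $U^>(k)A(\ell)\subseteq U^>(k+\ell)$ from the preceding Lemma. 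These together promote the interior $U$-factor through successive scales while tracking its horizontal position, so that by the time one reaches scale $2^n$ the promoted $U(2^n)$-factor sits inside the leftmost or rightmost half of the degree-$2^{n+1}$ product, exactly as required by the target. The meticulous binary-expansion bookkeeping behind this promotion is the technical heart of the argument and is carried out in~\cite{ls}.
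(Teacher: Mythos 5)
There is a genuine gap, concentrated exactly where you flag the ``technical heart''. First, a terminological slip: the piece you call ``all-$V$'', namely $v_\alpha w v_\beta$ with $v_\alpha\in V^>$, $w\in U(2^{p_i})$ and $v_\beta\in V^<$, still carries the $U$-factor $w$ in the middle (and $U(2^{p_i})\cap V(2^{p_i})=0$ by property 2), so it does not lie in any summand $V^<(k-j)V^>(j)$; your expansion therefore does not terminate. Second and more seriously, a single residual term $\alpha'w\beta'$ with $w\in U(2^{p_i})$ at one \emph{fixed} interior position is in general not in $E(k)$. The definition of $E(k)$ quantifies over \emph{all} paddings $A(j')\,(\cdot)\,A(2^{n+1}-j'-k)$, and for a padding $j'$ that places $w$ at an offset which is not a multiple of $2^{p_i}$, neither property 3 nor Lemma 1.4 helps: those tools promote a $U(2^m)$-factor only when it sits at a position divisible by $2^m$. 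Concretely, take $k=2$ (so $n=2$) and $r\in U(2)=U^>(2)$; the condition $r\in E(2)$ includes $A(1)rA(5)\subseteq U(4)A(4)+A(4)U(4)$, and since $A(1)rA(5)=\bigl(A(1)rA(1)\bigr)A(4)$ and $A(4)=U(4)\oplus V(4)$, this forces $A(1)rA(1)\subseteq U(4)$; nothing in properties 1--4 yields $A(1)U(2)A(1)\subseteq U(4)$ (property 3 gives only $A(2)U(2)+U(2)A(2)\subseteq U(4)$).

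The route the paper defers to (Theorem 11 of~\cite{ls}, Theorem 5.2 of~\cite{lsy}) avoids both issues by never decomposing a single representative. For each $j\in\{0,\dots,k\}$, set $W_j=U^<(k-j)A(j)+A(k-j)U^>(j)$; Lemma 1.3 gives $A(k)=W_j+V^<(k-j)V^>(j)$, so $\dim A(k)/W_j\le\dim V^<(k-j)\dim V^>(j)$, and the diagonal embedding $A(k)/\bigcap_j W_j\hookrightarrow\prod_j A(k)/W_j$ reduces the proposition to showing $\bigcap_j W_j\subseteq E(k)$. The intersection is essential: to handle the padding at position $i$ one selects a matching $j$ (roughly $j=i+k-2^n$) so that the $U$-factors of $W_j$ become aligned with the $2^n$-boundary of $A(2^{n+1})$, and it is membership in $W_j$ for \emph{every} $j$ that makes such a choice available for every padding. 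Your decomposition places a $U$-factor at a single position per term, which cannot serve all paddings at once. Note also that surjectivity of your map $\Psi$ is sufficient for, but not equivalent to, the stated dimension bound, and it is not clear that surjectivity even holds.
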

\begin{proof} The proof is the same as the proof of Theorem $11$ in \cite{ls}, or the proof of Theorem $5.2$ in \cite{lsy}.
\end{proof}
\begin{lemma}[Lemma 3.2, \cite{lsy}]
 For any $m\geq n$ and any $0\leq  k < 2^{m-n}$,
$$H(k2^{n})U(2^{n})H((2^{m-n} - k - 1)2^{n})\subseteq U(2^{m}).$$
\end{lemma}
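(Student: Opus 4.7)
The plan is to prove the lemma by induction on $d := m - n \geq 0$, with the key observation that property~3 ($A(2^{s})U(2^{s}) + U(2^{s})A(2^{s}) \subseteq U(2^{s+1})$) lets us absorb one extra factor of $A(2^{s})$ on either side of a $U$ at each step, provided we can line the block of length $2^{s}$ up correctly.

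For the base case $d=0$, the only allowed value is $k=0$, and the statement becomes $U(2^{n}) \subseteq U(2^{n})$, which is immediate. For the inductive step, assume the claim for $d$ and consider $0 \leq k < 2^{d+1}$. Using that in the free algebra $A$ we have $A(a)A(b) = A(a+b)$, I would split into two cases according to which ``half'' of length $2^{n+d}$ contains the middle factor $U(2^{n})$.

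If $k < 2^{d}$, then the right factor has length $(2^{d+1}-k-1)2^{n} = (2^{d}-k-1)2^{n} + 2^{n+d}$, so we can factor
\[
A(k2^{n})\,U(2^{n})\,A((2^{d+1}-k-1)2^{n}) = \bigl(A(k2^{n})\,U(2^{n})\,A((2^{d}-k-1)2^{n})\bigr)\,A(2^{n+d}).
\]
By the inductive hypothesis the parenthesized factor lies in $U(2^{n+d})$, and then property~3 yields inclusion in $U(2^{n+d+1})$. Symmetrically, if $k \geq 2^{d}$, write $k = 2^{d} + k'$ with $0 \leq k' < 2^{d}$, so that $2^{d+1}-k-1 = 2^{d}-k'-1$, and factor
\[
A(k2^{n})\,U(2^{n})\,A((2^{d+1}-k-1)2^{n}) = A(2^{n+d})\,\bigl(A(k'2^{n})\,U(2^{n})\,A((2^{d}-k'-1)2^{n})\bigr);
\]
apply the inductive hypothesis to the parenthesized factor and then property~3 again.

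There is no real obstacle here; the only thing to watch is that the case split is set up so that exactly one block of length $2^{n+d}$ can be peeled off, leaving a shorter configuration to which the inductive hypothesis applies. Note that property~3 as stated in the setup is the only ingredient used beyond the free-algebra identity $A(a)A(b)=A(a+b)$; in particular, properties~1, 2 and~4 (and the definition of $E$) play no role in this lemma.
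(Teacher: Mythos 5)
Your proof is correct: the induction on $d = m-n$, with the case split on whether $k<2^d$ or $k\geq 2^d$ so that a single block $A(2^{n+d})$ can be peeled off and absorbed via property~3, works exactly as you describe, and your observation that only property~3 and $A(a)A(b)=A(a+b)$ are needed is also right. Note that the paper itself does not prove this lemma but imports it as Lemma~3.2 of \cite{lsy}; the argument there is the same straightforward induction you have given, so your proposal matches the source's approach.
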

\section{Algebras satisfying polynomial identities}
 Throughout this section, $K$ is a  field and $A$ is a free $K$-algebra generated in degree $1$ by two elements $x$, $y$. Also in this section we assume that subspaces $U(2^m),V(2^m)$ of $A(2^m)$ satisfy properties 1-4 from the beginning of Section $1$, and that
  there is $\alpha $ such that $\dim V(2^{m})=1$ for all $m\geq \alpha $.
\begin{lemma}
 There is a natural number $c$, such that for every $n$ the dimension of the space $T_{n}=\sum_{i=0}^{n}V^<(i)V^>(n-i)$ is less than $c$.
\end{lemma}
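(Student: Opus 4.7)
The plan is to leverage the preceding theorem, which under the standing hypothesis $\dim V(2^m)=1$ for all $m\geq\alpha$ gives $V(2^{m+1})=V(2^m)V(2^m)$ for every $m\geq\alpha$. Since $V(2^\alpha)$ is spanned by a monomial (property~1), write $V(2^\alpha)=Kw$ for some monomial $w$ of degree $2^\alpha$; induction then yields $V(2^m)=Kw^{2^{m-\alpha}}$ for every $m\geq\alpha$. The point is that this collapses all ``large'' powers of $2$ appearing in the binary expansion of any index into a single power of $w$, which reduces $T_n$ to a sum of boundedly many distinct subspaces.

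To make this precise, for $k\in\mathbb{N}$ with binary expansion $k=\sum_{j=1}^t 2^{p_j}$, $p_1<\ldots<p_t$, split $k=k_s+k_\ell$ with $k_s=\sum_{p_j<\alpha}2^{p_j}\in\{0,\ldots,2^\alpha-1\}$ and $k_\ell=k-k_s$. Grouping the factors of $V^<(k)=V(2^{p_1})\cdots V(2^{p_t})$ according to this split, and using $V(2^{p_j})=Kw^{2^{p_j-\alpha}}$ for $p_j\geq\alpha$, one obtains the factorizations
\[ V^<(k) = V^<(k_s)\cdot Kw^{k_\ell/2^\alpha}, \qquad V^>(k) = Kw^{k_\ell/2^\alpha}\cdot V^>(k_s). \]
Substituting these into $V^<(i)V^>(n-i)$ and setting $a=i_s$, $b=(n-i)_s$, $L=(n-a-b)/2^\alpha$ (which is automatically a nonnegative integer whenever the summand is nonzero), one finds
\[ V^<(i)V^>(n-i) = V^<(a)\cdot w^L\cdot V^>(b), \]
which depends only on $(a,b)$, since $L$ is determined by $(n,a,b)$.

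The final step is a dimension count. As $(a,b)$ ranges over $\{0,\ldots,2^\alpha-1\}^2$ there are at most $4^\alpha$ distinct summands, so $T_n\subseteq \sum_{(a,b)} V^<(a)w^L V^>(b)$. Each summand is the image of $V^<(a)\otimes Kw^L\otimes V^>(b)$ under the multiplication map $A(a)\otimes A(L\cdot 2^\alpha)\otimes A(b)\to A(a+L\cdot 2^\alpha+b)$, which is injective because $A$ is a free graded algebra; hence $\dim V^<(a)w^L V^>(b)=\dim V^<(a)\cdot\dim V^>(b)$. Setting $D=\max_{0\leq a,b<2^\alpha}\dim V^<(a)\dim V^>(b)$ (a maximum over a finite set) gives $\dim T_n\leq 4^\alpha D$ uniformly in $n$, so any $c$ exceeding this bound works. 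The main technical point is verifying the factorization of $V^<(k)$ and $V^>(k)$; once $V(2^{p_j})=Kw^{2^{p_j-\alpha}}$ for $p_j\geq\alpha$ is in hand, the consolidation of the large factors into a single central power $w^{k_\ell/2^\alpha}$ follows immediately from repeated application of $w^a\cdot w^b=w^{a+b}$ in the free algebra.
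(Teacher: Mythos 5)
Your proof is correct and takes essentially the same approach as the paper's own: both use Theorem~1.3 together with property~1 to get $V(2^m)=Kv^{2^{m-\alpha}}$ for $m\geq\alpha$, then collapse the high-order binary digits of the indices into a single central power of $v$ so that $T_n$ is spanned by finitely many products $V^<(a)\,v^L\,V^>(b)$ with $a,b<2^\alpha$. Your write-up merely makes explicit two points the paper leaves implicit, namely the invocation of Theorem~1.3 (the paper says only ``by assumption'') and the injectivity of the graded multiplication map used in the final dimension count.
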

\begin{proof} By assumption, there is a monomial $v$ of degree $2^{\alpha }$ such that $V(2^{\alpha })=Kv$.
 Observe that by assumption $V(2^{\alpha +i})=Kv^{2^i}$. By the definition, the spaces $V^<(i)$ and $V^>(i)$ are contained in the appropriate products of spaces $V(2^{k})$.
 It follows that the space $T_{n}$ has a basis consisting of elements of the form
  $qv^{i}r$ where $q$ and $r$ are monomials of degrees not exceeding $2^{\alpha }$, and $i$ is such that the total degree of $qv^{i}r$ is $n$.
   It follows that $\dim T(n)\leq 2^{2^{2\alpha +2}}$, as required.
\end{proof}
We now recall the definition of the Capelli Polynomial (see pp. 8, \cite{br}; pp. 141, \cite{df}).
 $$d_{n}(x_{1}, x_{2}, \ldots , x_{n}; y_{1}, y_{2}, \ldots , y_{n})=\sum_{\sigma \in S\{1,2, \ldots ,n\}}(-1)^{\sigma }x_{\sigma (1)}y_{1}x_{\sigma (2)}y_{2}\ldots x_{\sigma (n)}y_{n}$$
  where $S\{1, 2, \ldots ,n\}$ is the set of all permutations of the set $\{1, 2, \ldots , n\}$.
\begin{lemma} Let $c$ be as in Lemma $2.1$. Let $n$ be a natural number and $m_{1}, \ldots , m_{c+1}\in M(n)$, and  $r_{1}, r_{2}, \ldots , r_{c+1}\in M$ be such that
 $\deg (r_{i})+n$ is divisible by $2^{\alpha }$ for all $i\leq c+1$. Then
 $$d_{c+1}(m_{1}, m_{2}, \ldots , m_{c+1}; r_{1}, r_{2}, \ldots , r_{c+1}) \in E.$$
\end{lemma}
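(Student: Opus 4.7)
The plan is to leverage the bound $\dim A(n)/E(n) \leq c$ together with the alternation of the Capelli polynomial, reducing the statement to the classical principle that $d_{c+1}$ vanishes whenever its $x$-arguments lie in a $c$-dimensional space.

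The first step is to extract from the proof of Proposition~1.3 the refined bound $\dim A(n)/E(n) \leq \dim T_n$ (the proof actually shows that every element of $A(n)$ is congruent modulo $E(n)$ to an element of $T_n=\sum_j V^<(n-j)V^>(j)$), and to combine it with Lemma~2.1 to conclude $\dim A(n)/E(n)\leq c$. Since $c+1$ vectors in a space of dimension $\leq c$ are linearly dependent, there exist scalars $\lambda_1,\ldots,\lambda_{c+1}\in K$, not all zero, such that $e:=\sum_{i=1}^{c+1} \lambda_i m_i\in E(n)$. After relabelling I may assume $\lambda_{c+1}\neq 0$ and solve $m_{c+1}=\lambda_{c+1}^{-1}\bigl(e-\sum_{i=1}^c \lambda_i m_i\bigr)$.

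Substituting this expression into the last $x$-slot of $d_{c+1}$ and using multilinearity, the Capelli expression splits as
$$\lambda_{c+1}^{-1}\Bigl(d_{c+1}(m_1,\ldots,m_c,e;\,r_1,\ldots,r_{c+1})-\sum_{i=1}^c \lambda_i\,d_{c+1}(m_1,\ldots,m_c,m_i;\,r_1,\ldots,r_{c+1})\Bigr).$$
Each summand of the inner sum has $m_i$ occurring in two distinct $x$-slots and therefore vanishes by the alternation of $d_{c+1}$ in its $x$-arguments. The remaining term expands into monomials of the shape $(\cdots)\,e\,(\cdots)$, each of which lies in $AEA\subseteq E$ because $E$ is an ideal by Lemma~1.1. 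Hence the full Capelli expression lies in $E$, as required.

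The chief obstacle I anticipate is the first step: the literal statement of Proposition~1.3 bounds $\dim A(n)/E(n)$ only by $\sum_j \dim V^<(n-j)\dim V^>(j)$, a quantity whose individual terms are bounded but whose sum can grow with $n$, so one must look inside the proof of Proposition~1.3 (referring to \cite{ls,lsy}) to pull out the tighter inequality $\dim A(n)/E(n)\leq \dim T_n$ that pairs cleanly with Lemma~2.1. Once that is secured, the rest of the argument is the standard observation that the Capelli identity of size $c+1$ is forced on any quotient whose graded pieces live in at most $c$ dimensions. Interestingly, the divisibility hypothesis $2^\alpha\mid \deg(r_i)+n$ plays no role in the argument above and must be earmarked for use in subsequent lemmas, probably to align block structures with the powers of two that govern the construction of $E$.
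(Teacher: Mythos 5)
The linchpin of your argument is the claim that $\dim A(n)/E(n)\leq c$, which you propose to extract from Proposition~1.3 by asserting that its proof really shows $A(n)=E(n)+T_n$. This is not established and is in fact contradicted by the paper itself: in the proof of Theorem~2.5 the author uses Proposition~1.3 only to conclude that $\dim A(k)/E(k)<(k+1)c$, that is, \emph{quadratic} growth of $A/E$, and then explicitly entertains the case ``Suppose that $A/E$ has quadratic growth'' and invokes Bergman's gap theorem and Lemma~2.4 to descend to a linear-growth image. If the bound $\dim A(k)/E(k)\leq c$ were available, the author would simply have said that $A/E$ has linear growth and concluded by Small--Stafford--Warfield; the fact that the paper does not do this is strong evidence that the refined bound you need is not a consequence of the construction. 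Worse, your argument never touches the divisibility hypothesis $2^\alpha\mid \deg r_i+n$; if the lemma held without it, Lemma~2.3 would be trivially true with $\beta=c+1$, but the paper goes to the trouble of choosing $\beta>2^\alpha(c+1)$ precisely so that a pigeonhole argument can line up $c+1$ positions with a common residue mod $2^\alpha$ before reducing to Lemma~2.2. The divisibility hypothesis is doing real work and cannot be set aside.

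What the paper actually does is subtler and makes the role of the divisibility transparent. Rather than seeking a relation $\sum\lambda_jm_j\in E(n)$ (a single fixed subspace), it fixes an ``offset'' $0<i\leq 2^\alpha$ and uses the decomposition $A(n)=\bigl(U^<(i)A(n-i)+A(i)U^>(n-i)\bigr)\oplus V^<(i)V^>(n-i)$ from Lemma~1.3. Since $V^<(i)V^>(n-i)\subseteq T_n$ has dimension at most $c$, the $c+1$ monomials are linearly dependent modulo the offset-dependent subspace $U^<(i)A(n-i)+A(i)U^>(n-i)$, not modulo $E(n)$. After the alternation argument (which you correctly identify), one is left with a term where $d\in U^<(i)A(n-i)+A(i)U^>(n-i)$ sits in some $x$-slot, and because the divisibility hypothesis forces every $x$-slot to start at a position that is a multiple of $2^\alpha$ (up to a uniform shift by $i$), each such term can be absorbed into $\sum_k A(2^\alpha k)U^>(2^\alpha(\cdot))A$ and hence into $U(2^{s+1})A(2^{s+1})+A(2^{s+1})U(2^{s+1})$ via Lemmas~1.3 and~1.4. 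That absorption is exactly where the divisibility of $\deg r_i+n$ by $2^\alpha$ enters, and it has no analogue in your approach. The alternation half of your argument is fine, but the linear-dependence half replaces the paper's carefully chosen, offset-dependent subspace by $E(n)$ itself, and the bound $\dim A(n)/E(n)\leq c$ that this requires is unjustified.
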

\begin{proof} Denote $P=d_{c+1}(m_{1}, m_{2}, \ldots , m_{c+1}; r_{1}, r_{2}, \ldots , r_{c+1})$.
Observe that $P\in A(\gamma)$ where $\gamma=2^{\alpha }q$ for some $q$.
Let $s$ be such that $2^{s}\leq \gamma <2^{s+1}$, so clearly $\alpha <s+1$. We will show that, for any $t>0$ and any $0<i\leq 2^{\alpha }$, we have
 $A(2^{\alpha }t-i)PA\cap A(2^{s+2})\subseteq U(2^{s+1})A(2^{s+1})+A(2^{s+1})U(2^{s+1})$.
  Then, because $i, t$ were arbitrary, and by the definition of $E$, we would get $P\in E$.\\
 Fix $0<i \leq 2^{\alpha }$.  We will show now that $A(2^{\alpha }t-i)PA\cap A(2^{s+2})\subseteq U(2^{s+1})A(2^{s+1})+A(2^{s+1})U(2^{s+1})$.
 By Lemma $1.3$, $U^{<}(i)A(n-i)+A(i)U^{>}(n-i)+V^{<}(i)V^{>}(n-i)=A(n)$.
  By Lemma $2.1$, there are $\beta _{i}\in K$ such that $\sum_{j=1}^{c+1}\beta_{j}m_{j}\in
 U^{<}(i)A(n-i)+A(i)U^{>}(n-i)$. We can assume that $m_{c+1}=\sum_{i\leq c}m_{i}\gamma _{i}+d$ for some $\gamma _{i}\in K$
 and some $d\in U^{<}(i)A(n-i)+A(i)U^>(n-i)$. After substituting the expression for $m_{c+1}$ into the expression for $P$, we get $$P\in \sum_{k=0,1,2, \ldots }A(2^{\alpha }k)(U^{<}(i)A(n-i)+A(i)U^{>}(n-i))A(2^{\alpha }(q-k)-n).$$

   Note that, since $\deg P$ is divisible by $2^{\alpha }$, we get $A(2^{\alpha }t-i)PA\cap A(2^{s+2})\subseteq A(2^{\alpha }t-i)PA(i)A$.
   Therefore, $A(2^{\alpha }t-i)PA\cap A(2^{s+2})\subseteq \sum_{k=0,1,2, \ldots }A(2^{\alpha }(k+t-1))A(2^{\alpha }-i)(U^{<}(i)A(n-i)+A(i)U^{>}(n-i))A(2^{\alpha }(q-k)-n)A(i)A.$

Observe now that the following holds:
\begin{itemize}
\item[a.] By Lemma $1.3$, we get $A(2^{\alpha }-i)U^{<}(i)\subseteq U(2^{\alpha })$. Recall that $U(2^{\alpha })=U^{>}(2^{\alpha })$. Hence, $A(2^{\alpha }(k+t-1))A(2^{\alpha }-i)(U^{<}(i)A(n-i))A(2^{\alpha }(q-k)-n)A(i)A \subseteq A(2^{\alpha }(k+t-1))U^{>}(2^{\alpha })A$.
\item[b.] By Lemma $1.3$,  we get $U^{>}(n-i)A(2^{\alpha}(q-k)-(n-i))\subseteq U^{>}(2^{\alpha}(q-k))$. Therefore, $A(2^{\alpha }(k+t-1))A(2^{\alpha }-i)(A(i)U^{>}(n-i))A(2^{\alpha }(q-k)-n)A(i)A\subseteq A(2^{\alpha }(k+t))U^{>}(2^{\alpha}(q-k))A.$
\end{itemize}

 Using a. and b. we get
$$A(2^{\alpha }t-i)PA\cap A(2^{s+2})\subseteq \sum_{k,j=0,1,2, \ldots }A(2^{\alpha }k)U^{>}(2^{\alpha}(j+1))A.$$

 By assumptions from the beginning of this section, we get $\dim V(2^{\alpha +i})=1$ if $i\geq 0$. By Theorem $1.3$, $U(2^{\alpha +i+1})=U(2^{\alpha +i})A(2^{\alpha +i})+A(2^{\alpha +i})U(2^{\alpha +i})$
   for all $i\geq 0$. Applying this property several times, we get that for all natural $j>0$ we have
   $U^{>}(2^{\alpha }j)=\sum_{i=0,1, \ldots , j-1}A(2^{\alpha }i)U(2^{\alpha })A(2^{\alpha }(j-i-1))$. It follows that
   $A(2^{\alpha }t-i)PA\cap 2^{s+2}\subseteq
  \sum_{i,j =0,1,\ldots }A(2^{\alpha }i)U(2^{\alpha })A(2^{\alpha }j)$. Therefore, by Lemma $1.4$ and because $s+1> \alpha $,
    we get that  $A(2^{\alpha }t-i)PA\cap A(2^{s+2})\subseteq A(2^{s+1})U(2^{s+1})+A(2^{s+1})U(2^{s+1})$.
\end{proof}

\begin{lemma} Let $c$ be as in Lemma $2.1$. Let $\beta > 2^{\alpha }(c+1)$.
 Let $n$ be a natural number, $m_{1}, \ldots , m_{\beta}\in M(n)$ and let $r_{1}, r_{2}, \ldots , r_{\beta}\in M$.
  Then $$d_{\beta }(m_{1}, m_{2}, \ldots ,m_{\beta }; r_{1}, r_{2}, \ldots , r_{\beta})\in E.$$
\end{lemma}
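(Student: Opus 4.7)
The plan is to reduce Lemma 2.3 to Lemma 2.2 in two stages: first use multilinearity and the antisymmetry of the Capelli polynomial to absorb most of the $m_j$'s into a common intersection subspace, and then run the position analysis of Lemma 2.2 with the added flexibility that many arguments now lie in this intersection subspace.

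I would first set
\[
W(n) := \bigcap_{i=1}^{2^\alpha}\bigl[U^{<}(i)A(n-i) + A(i)U^{>}(n-i)\bigr].
\]
By Lemma $2.1$, each of the $2^\alpha$ summands has codimension at most $c$ in $A(n)$, so $\dim A(n)/W(n) \le 2^\alpha c < \beta$. Thus $m_1,\dots,m_\beta$ are linearly dependent modulo $W(n)$, and by iterating the substitution $m_j = \sum_k \gamma_k m_k + w_j$ with $w_j\in W(n)$ (at each step the ``$\sum_k \gamma_k m_k$''-part drops out of $d_\beta$ by antisymmetry, since it forces some $m_k$ to appear in two Capelli arguments), one obtains
\[
d_\beta(m_1,\dots,m_\beta;\,r_1,\dots,r_\beta) = d_\beta(t_1,\dots,t_\beta;\,r_1,\dots,r_\beta),
\]
where at least $\beta - 2^\alpha c > 2^\alpha$ of the $t_j$'s lie in $W(n)$.

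Next I would follow the pattern of Lemma $2.2$: fix $t\ge 1$ and $0<i\le 2^\alpha$, set $j_0:=2^\alpha t-i$, and examine $A(j_0)\,d_\beta(t;r)\,A\cap A(2^{s+2})$ where $2^s\le \deg d_\beta<2^{s+1}$. Each summand of the Capelli is indexed by a permutation $\sigma$. The condition that more than $2^\alpha$ of the $t_j$'s lie in $W(n)$ should force, for every $\sigma$, the existence (by pigeonhole on residues modulo $2^\alpha$) of a position $k$ with $t_{\sigma(k)}\in W(n)$ and cumulative left-degree $j_0+(k-1)n+\sum_{l<k}\deg(r_l)$ congruent to $-i^*\pmod{2^\alpha}$ for a suitable $i^*\in\{1,\dots,2^\alpha\}$. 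Decomposing this $W(n)$-element as $ua+a'u'$ with $u\in U^{<}(i^*)$, $a\in A(n-i^*)$, $a'\in A(i^*)$, $u'\in U^{>}(n-i^*)$, and applying Lemmas $1.3$ and $1.4$ exactly as in Lemma $2.2$, one lifts the relevant $U$-factor to $U(2^{s+1})$ and obtains the inclusion into $U(2^{s+1})A(2^{s+1})+A(2^{s+1})U(2^{s+1})$. Since $(t,i)$ range over all possible left-multiplication degrees, this gives $d_\beta(m;r)\in E$.

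The hard part will be the pigeonhole in the last step: verifying that having strictly more than $2^\alpha$ many $W(n)$-slots among the $\beta$ total slots really does guarantee, for every permutation $\sigma$, a $W(n)$-slot at a position in the correct residue class modulo $2^\alpha$. This requires a careful analysis of how the $\beta$ slots of degree $n$, interleaved with the $r_l$-slots of arbitrary degrees, distribute their starting positions modulo $2^\alpha$ in the product word.
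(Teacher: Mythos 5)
Your proposal takes a genuinely different route from the paper. The paper's proof never introduces an intersection space $W(n)$: it uses the ``specialization'' trick of merging $m_i$'s into adjacent $r$-slots (writing $Z$ as a signed sum of $(\beta-1)$-variable Capelli polynomials $Z(t,i)$), chooses $c+1$ indices $i_1<\dots<i_{c+1}$ whose cumulative word-degrees $e(i_k)=\sum_{l\le i_k}\deg r_l + n\,i_k$ share a residue modulo $2^\alpha$ (pigeonhole over $\beta>2^\alpha(c+1)$ indices), specializes away the rest, and lands on expressions of the form $q\cdot d_{c+1}(M_1,\dots,M_{c+1};q_1,\dots,q_{c+1})$ with $\deg q_i+n\equiv 0\pmod{2^\alpha}$, so that Lemma $2.2$ applies and the ideal property of $E$ finishes. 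Your proof instead absorbs most of the $m_j$'s into $W(n)=\bigcap_{i=1}^{2^\alpha}\bigl(U^{<}(i)A(n-i)+A(i)U^{>}(n-i)\bigr)$ by multilinearity and alternation, then re-runs the degree-tracking analysis of Lemma $2.2$ term by term over $\sigma$. This buys a self-contained argument that does not factor through Lemma $2.2$, at the cost of repeating most of its bookkeeping.

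One substantive remark on the part you flag as ``the hard part'': there is no pigeonhole difficulty of the kind you describe. For a fixed $\sigma$ and any position $k$ with $t_{\sigma(k)}\in W(n)$, the cumulative left-degree $j_0+(k-1)n+\sum_{l<k}\deg r_l$ falls in \emph{some} residue class modulo $2^\alpha$, and every residue is of the form $-i^*$ for a unique $i^*\in\{1,\dots,2^\alpha\}$; since $W(n)$ sits inside $U^{<}(i^*)A(n-i^*)+A(i^*)U^{>}(n-i^*)$ for \emph{every} such $i^*$, any single $W(n)$-slot already gives a usable decomposition. So one such slot per $\sigma$ suffices, and the bound $\beta-2^\alpha c>2^\alpha$ is more room than your stage $2$ actually needs. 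The genuinely delicate step, which you compress into ``applying Lemmas $1.3$ and $1.4$ exactly as in Lemma $2.2$,'' is verifying that after the $i^*$-shift the prefix $A(j_0+p_\sigma)U^{<}(i^*)\subseteq U^{<}(2^\alpha w)$ and the tail $U^{>}(n-i^*)A(\cdot)\subseteq U^{>}(2^{s+2}-2^\alpha w)$ expand (via Theorem $1.3$ and the one-dimensionality of $V(2^m)$ for $m\geq\alpha$) into $\sum A(2^\alpha a)U(2^\alpha)A(2^\alpha b)$, to which Lemma $1.4$ applies with the usual dichotomy $a<2^{s+1-\alpha}$ or $b<2^{s+1-\alpha}$. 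This does follow the same mechanics as Lemma $2.2$, so the route is sound, but that is where the real work is, not in the residue pigeonhole.
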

\begin{proof}  Denote $Z=d_{\beta }(m_{1}, m_{2}, \ldots ,m_{\beta }; r_{1}, r_{2}, \ldots , r_{\beta})$. We will show that $Z$ is in the ideal generated by elements of the same form as $P$, in Lemma $2.2$.
 Consider elements $e(1)=\deg r_{1}+n$, $e(2)=\deg r_{1}+\deg r_{2}+2n,$ \ldots , $e(i)=\sum_{k=1}^{i}\deg r_{k}+ni$.
  There are $c+1$ elements $i_{1}, i_{2}, \ldots ,i_{c+1}$ such that $e(i_{1}), e(i_{2}), \ldots , e(i_{c+1})$ give the same remainder modulo $2^{\alpha }$.
   Denote $Z(t, i)=d_{\beta -1}(m_{1}, \ldots ,m_{i-1},$ $ m_{i+1},$ $ \ldots ,m_{\beta };
   r_{1}, \ldots ,r_{i-2}, r_{t-1}m_{i}r_{t}, r_{i+1}, \ldots , r_{\beta}).$
  Observe that, for every $t\leq n$,  $Z$ is a linear combination of such elements, namely $Z\in \sum_{i=1, \ldots ,\beta }KZ_{i,t}$. We will call this construction specializing at place $t$.
 We can repeat this construction to expressions $Z_{i,t}$ instead of $Z$. After repeating this construction several times and specializing at suitable places, we get that $Z$ is a linear combination of elements of the form
 $q(d_{c+1}(M_{1}, \ldots ,M_{c+1}; q_{1}, \ldots ,q_{c+1}))$ where $\{M_{1}, \ldots , M_{c+1}\}\subseteq \{m_{1}, \ldots , m_{\beta }\}$, and $q\in A$, $q_{1}, \ldots , q_{c+1}\in M$ are such that $n+q_{i}$ is divisible by $2^{\alpha }$ for each $i\leq c+1$, and $q\in A$. By Lemma $2.2$, all such elements are in $ E$, and hence $Z\in E$.
\end{proof}
\begin{lemma} Let $K$ be a field, and
let $R$ be an infinite-dimensional, graded, finitely generated $K$-algebra.
Then $R$ can be homomorphically mapped  onto a prime, infinite-dimensional,  graded algebra.
 Moreover, if $R$ has quadratic growth and satisfies a polynomial identity, then $R$ can be homomorphically mapped onto a prime, graded algebra with  linear growth.
\end{lemma}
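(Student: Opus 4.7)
The plan is to handle the two claims in sequence, with Zorn's lemma for the first and structure theory of affine PI algebras for the second.

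For the first claim, I consider the poset $\mathcal{P}$ of homogeneous ideals $I$ of $R$ with $R/I$ infinite-dimensional, ordered by inclusion; it contains the zero ideal and so is non-empty. The only nontrivial point in the Zorn setup is that every chain has an upper bound in $\mathcal{P}$. Given a chain $\{I_\alpha\}$ with union $I$, I argue by contradiction: if $R/I$ were finite-dimensional, then $R(n)\subseteq I$ for all $n \geq N$, and since $R(n)$ is finite-dimensional while $I(n)$ is a directed union of subspaces, $R(n) \subseteq I_{\alpha_n}$ for some $\alpha_n$. Choosing generators of $R$ of degree at most $d$ and setting $\alpha^{\ast} = \max\{\alpha_N,\ldots,\alpha_{N+d-1}\}$, an induction on $n \geq N$ shows $R(n) \subseteq I_{\alpha^{\ast}}$ (for $n \geq N+d$, any product-of-generators spanning element of $R(n)$ factors through $R(m)$ for some $N \leq m < n$, already inductively in $I_{\alpha^{\ast}}$), contradicting $R/I_{\alpha^{\ast}}$ infinite-dimensional. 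For a maximal $I\in\mathcal{P}$, to prove primeness, suppose $J_1,J_2 \supsetneq I$ are homogeneous with $J_1 J_2 \subseteq I$; then $R/J_i$ is finite-dimensional by maximality, giving $R(n) \subseteq J_i$ for $n \geq N_i$. For $n \geq N_1 + N_2 + d$, any product of degree-$\leq d$ generators in $R(n)$ admits a split into a prefix of degree in $[N_1, N_1+d]$ (lying in $J_1$) and a suffix of degree at least $N_2$ (lying in $J_2$), whence $R(n) \subseteq J_1 J_2 \subseteq I$, contradicting $R/I$ infinite-dimensional. So $R/I$ is a prime infinite-dimensional graded image of $R$.

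For the second claim, apply the first claim to produce a homogeneous prime ideal $P$ with $\bar R := R/P$ a prime, infinite-dimensional, graded, affine PI algebra. Since $\bar R$ is a quotient of the quadratic-growth algebra $R$, it has GK dimension at most $2$, and integrality of GK dimension for affine PI algebras forces $\GKdim(\bar R) \in \{1,2\}$. If $\GKdim(\bar R) \leq 1$, then $\dim \bar R(n)$ is uniformly bounded and we are done. Otherwise $\GKdim(\bar R) = 2$, and I invoke the structure of affine PI algebras: decomposing a central element into homogeneous components shows $Z(\bar R)$ is a graded subring, and primeness of $\bar R$ forces $Z(\bar R)$ to be a commutative domain (if $ab=0$ with $a,b$ central, then $aRb = abR = 0$, so the ideals they generate multiply to $0$). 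By the Artin-Tate lemma for PI algebras, $\bar R$ is module-finite over an affine central subalgebra; replacing that subalgebra with the graded affine subalgebra $A$ generated by the homogeneous components of its generators produces a graded commutative affine domain $A\subseteq Z(\bar R)$ over which $\bar R$ is still module-finite, necessarily with $\GKdim(A) = \GKdim(\bar R) = 2$.

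Now choose a nonzero homogeneous element $a \in A$ of positive degree. Since $A$ is a commutative affine domain of GK dimension $2$ and $a$ is regular, $A/(a)$ has GK dimension at most $1$, and because $\bar R/(a\bar R)$ is module-finite over $A/(a)$, we obtain $\GKdim(\bar R/(a\bar R)) \leq 1$. Let $Q$ be a minimal prime of $\bar R/(a\bar R)$; minimal primes over a homogeneous ideal are homogeneous, so $Q$ is homogeneous. Then $(\bar R/(a\bar R))/Q$ is a prime graded algebra of GK dimension at most $1$, i.e., of linear growth, and pulling $Q$ back to $R$ gives the required prime graded homomorphic image. The main obstacle will be producing the graded affine central subalgebra $A$ over which $\bar R$ is module-finite: the PI Artin-Tate lemma delivers the affine central subalgebra, and its graded refinement is obtained by the standard trick of taking the subalgebra generated by the homogeneous components of its generators. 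The remaining pieces---regularity of $a$, the GK drop, and primeness of the minimal-prime quotient---are routine once module-finiteness over a graded central affine domain is in hand.
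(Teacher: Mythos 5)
Your argument for the first claim is correct, though it takes a genuinely different route from the paper. The paper factors out the prime radical $B(R)$, which is homogeneous, observes that the semiprime graded quotient $P=R/B(R)$ must be infinite-dimensional (a graded semiprime algebra cannot be nilpotent), and then passes to $P/M$ where $M$ is the largest homogeneous ideal contained in some proper prime of $P$. You instead run Zorn's lemma directly on the poset of homogeneous ideals with infinite-dimensional quotient and check primeness of a maximal element by a degree-splitting argument. Both arguments are of comparable length; yours avoids invoking the prime radical but instead needs the chain argument (which correctly uses that each $R(n)$ is finite-dimensional and that $R$ is generated by finitely many homogeneous elements of bounded degree) and the standard fact that a homogeneous ideal that is prime on homogeneous elements is prime.

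The second paragraph contains a genuine gap. You assert that ``by the Artin-Tate lemma for PI algebras, $\bar R$ is module-finite over an affine central subalgebra.'' Artin-Tate runs in the opposite direction: \emph{given} that an affine algebra is module-finite over a central subring, it concludes that the subring is affine; it does not produce module-finiteness. For prime affine PI algebras of $\GKdim$ two, module-finiteness over the center, or over any commutative affine subalgebra, is not automatic; what one has in general is only Schelter integrality over a commutative affine subring, which does not give finite module generation and hence does not transfer $\GKdim$ bounds in the way your argument requires. The paper avoids this entirely: Rowen's theorem gives a nonzero central element $z=z_1+\cdots+z_m$ of $P$, the top homogeneous component $z_m$ is again central (the same degree-by-degree commutator argument you used to show $Z(\bar R)$ is graded) and, being nonzero and central in a prime ring, is regular. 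Then one computes directly that $\dim(P/z_mP)(n)=\dim P(n)-\dim P(n-m)$, sums to get $\sum_{i\le n}\dim(P/z_mP)(i)<Cmn$, and concludes $P/z_mP$ has at most linear growth; if $P$ does not already have linear growth, applying the first part of the lemma to the infinite-dimensional graded algebra $P/z_mP$ finishes. Your idea of killing a homogeneous central element of positive degree is the right one, but the growth of the quotient should be controlled by this elementary regularity computation rather than by an unavailable Noether normalization.
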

\begin{proof} We first construct a prime homomorphic image of $R$. Let $B(R)$ be the prime radical of $R$, then $R/B(R)$ is semiprime. Observe that $B(R)$ is homogeneous, since $R$ is graded. Therefore, $R/B(R)$ is graded. Consequently, as a graded ring,  $R/B(R)$ is either  infinite dimensional or nilpotent. It cannot be nilpotent, because it is semiprime. Hence $P=R/B(R)$ is infinite-dimensional, graded and semiprime. Note that since the prime radical of $P$ is zero, the intersection of all prime ideals in $P$ (which equals the prime radical) is zero, hence there is a prime ideal $Q$ in $P$ which is not equal to $P$. Note that the largest homogeneous subset, call it $M$, contained in $Q$ is also a prime ideal in $P$. Now, $P/M$ is prime and non-zero and graded, hence it is infinite dimensional, as required.

 Suppose now that $R$ satisfies a polynomial identity. We will now show that $P$ can be homomorphically mapped onto a prime, graded algebra with linear growth. Since P is prime
 and satisfies a polynomial identity, by Rowen's theorem \cite{row} it has a non-zero central element $z=z_1+...+z_m$, where
 $z_i$ has degree $i$ and $z_m$ is nonzero and $m\geq 1$.
  Notice that $z_m$ is central, since if r is a homogeneous element of degree $d$
 then $[r,z] = [r,z_m]+ e$ where $e$ consists of lower degree terms.
 Observe that  $z_m$ is regular as P is prime.

 Let $S=P/z_mP$.  Then $\dim S(n)=\dim P(n)-\dim P(n-m)$, since $z_m$ is
 regular, homogeneous and of degree  $m$.
 Since $P$ is graded with quadratic growth we have some $C$ such that $\dim P(n)<Cn$ for all $n$.
  Thus we have
$ \sum_{i=0}^{n}\dim S(i)<\sum_{i=0}^{n}\dim P(n)-\dim P(n-m) <\sum_{i=0}^{m}\dim P(n-i)\leq  Cmn$.
By \cite{sw}, and since $m$ and $C$ are constant, we see that $S$ has at most linear growth.

 If $P$ has linear growth then the proof is finished, as $P$ has linear growth and is a homomorphic image of $R$.

 If $P$ has greater than linear growth then $S$ is infinite dimensional, and so by the first
 part of our theorem $S$ has prime infinite dimensional image, with linear growth (which is also a homomorphic image of $R$).
\end{proof}
\begin{theorem}
 The algebra $A/E$ can be homomorphically mapped onto a graded, Noetherian, prime algebra with linear growth.
\end{theorem}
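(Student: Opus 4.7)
The plan is to verify the two hypotheses of Lemma~2.4 for $R:=A/E$, namely quadratic growth and a polynomial identity, apply that lemma to extract a prime graded homomorphic image $S$ of linear growth, and then invoke the Small--Warfield structure theorem to conclude that $S$ is Noetherian. The algebra $R$ is infinite-dimensional, graded, and affine (generated by $x+E,y+E$) by Lemma~1.1, since $\dim V(2^m)=1\neq 0$ for all $m\geq\alpha$.

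\textit{Growth.} The hypothesis $\dim V(2^m)=1$ for $m\geq\alpha$ means that in the product $V(2^{p_1})\cdots V(2^{p_t})$ defining $V^<(k)$ (with $k=\sum 2^{p_i}$ the binary expansion), only factors with $p_i<\alpha$ can exceed dimension one; the same holds for $V^>(k)$. Hence, setting $D:=\prod_{i<\alpha}\dim V(2^i)$, we get $\dim V^<(k),\dim V^>(k)\leq D$ uniformly in $k$. Inserting this into Proposition~1.6 yields $\dim R(k)\leq D^2(k+1)$, so $R$ has at most quadratic growth.

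\textit{Polynomial identity.} Lemma~2.3 provides an integer $\beta$ such that the Capelli polynomial $d_\beta$ vanishes in $R$ on every $\beta$-tuple of monomials of a common degree, and, by multilinearity, on every $\beta$-tuple of homogeneous elements of a common degree. To convert this graded identity into an ordinary polynomial identity on $R$, I would fix $N$ sufficiently large in terms of $\beta$ and $D$ and consider $d_N$. For any $x_1,\dots,x_N\in R$, expand $d_N(x;y)$ by multilinearity into a sum of Capelli evaluations on homogeneous tuples $(x_1^{(k_1)},\dots,x_N^{(k_N)})$; each summand vanishes either because some degree $n$ appears more than $\dim R(n)\leq D^2(n+1)$ times, in which case the entries in degree $n$ are linearly dependent and the alternating property of $d_N$ kills the contribution, or because some degree is repeated at least $\beta$ times, in which case Lemma~2.3 applied to a same-degree subfamily, combined with the alternating and multilinear properties of $d_N$, kills the contribution. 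Calibrating $N$ against the polynomial growth bound $D^2(n+1)$ ensures that every tuple falls into one of these two regimes, so $d_N$ is a polynomial identity on $R$.

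With both hypotheses in hand, Lemma~2.4 produces a prime graded homomorphic image $S$ of $R$ of linear growth. Since $S$ is prime, affine, graded, PI, and of Gelfand--Kirillov dimension at most one, the Small--Warfield theorem \cite{sw} quoted in the introduction implies that $S$ is a finitely generated module over its center $Z(S)$, and $Z(S)$ is a Noetherian commutative affine ring. Therefore $S$ is Noetherian, as required. The principal obstacle is the polynomial identity step: Lemma~2.3 supplies vanishing only on common-degree tuples, and promoting this to a standard polynomial identity requires a careful combinatorial calibration of $N$ against the polynomial growth bound on $\dim R(n)$ so that every homogeneous tuple arising from the multilinear expansion of $d_N$ falls into one of the vanishing regimes; the growth estimate, the application of Lemma~2.4 and the Small--Warfield reduction are then direct.
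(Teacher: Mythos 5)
Your growth estimate (bounding $\dim V^{<}$ and $\dim V^{>}$ by $D=\prod_{i<\alpha}\dim V(2^i)$ and feeding this into Proposition~1.6) is correct and essentially matches what the paper does, and the final Small--Warfield reduction is fine. The serious problem is the polynomial identity step, where you try to show that $A/E$ itself satisfies a Capelli identity $d_N$; this cannot work as described, and in fact the paper never claims $A/E$ is PI. Your two ``vanishing regimes'' --- some degree repeated more than $\dim R(n)$ times, or some degree repeated at least $\beta$ times --- together only cover tuples in which \emph{some} degree repeats sufficiently often. But a tuple $(k_1,\dots,k_N)$ of pairwise distinct and arbitrarily large degrees repeats no degree at all, and no choice of $N$, however large, rules such tuples out: for every $N$ you can choose $N$ elements concentrated in $N$ different, high, graded pieces, the corresponding homogeneous components are linearly independent (they live in different graded components), and neither alternation nor Lemma~2.3 gives any vanishing. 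So the ``calibration of $N$'' you flag as the principal obstacle is not merely delicate; it is impossible.

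The paper takes a genuinely different route that sidesteps exactly this issue. It first uses the cheap half of Lemma~2.4 (which needs no PI hypothesis) to pass to a prime graded infinite-dimensional image $P$ of $A/E$. It then invokes the extended centroid $C$ of the prime ring $P$: by Lemma~2.3 together with Theorem~2.3.7 of Beidar--Martindale--Mikhalev, the partial Capelli identities force same-degree elements in $P$ to become linearly dependent over $C$ beyond a uniform bound, so the central closure $CP$ has linear growth over $C$. Then Small--Stafford--Warfield gives that $CP$ is PI, whence (via Small--Warfield and a Capelli-identity argument from Kanel-Belov--Rowen) $P$ is PI. Only after all this does the paper apply the ``Moreover'' part of Lemma~2.4 (and Bergman's gap theorem) to get a linear-growth prime image. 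The crucial ingredient you are missing is the passage to a prime quotient and the extended centroid, which converts ``Capelli vanishes on same-degree tuples'' into genuine linear dependence and hence a growth bound --- a transformation that is simply not available at the level of the free quotient $A/E$ itself.
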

\begin{proof} By Lemma $2.4$, $A/E$ has a prime, graded, infinite dimensional image which is graded by natural numbers; call it $P$.
We will now show that $P$ satisfies a polynomial identity.
 The extended centroid of a prime ring is a field (see page 70, line 16 \cite{bfm}). Let $C$ be the extended centroid of $P$, then the central closure
 $CP$ of $P$  has at most linear growth (as an algebra over the field $C$), as there is less than $\beta n$ elements of degree not exceeding $n$ linearly independent over $C$, by Lemma $2.3$ and Theorem $2.3.7$ \cite{bfm}. By the Small-Stafford-Warfield theorem, every algebra with linear growth satisfies a polynomial identity, therefore $CP$ satisfies a polynomial identity \cite{ssw}.
  It is known that the extended closure $CP$ of a prime ring $P$ is prime (see \cite{bcm}, pp. 238), and hence by \cite{sw} the algebra  $CP$ is finite-dimensional over its center. By Lemma $1.21$ in \cite{br}, we see that $CP$ satisfies a Capelli identity, therefore $P$ is a polynomial identity algebra.

 Let $c$ be as in Lemma $2.1$. By Proposition $1.1$, we have $\dim A(k)/ E(k)\leq \sum_{j=0}^k\dim V^<(k-j)\dim V^>(j)<(k+1)c,$ and so $A/E$ has at most quadratic growth.
Notice that $P$ has growth smaller than $A/E$. By Bergman's Gap theorem, $P$ has  either linear or quadratic growth. If the latter holds then we are done. Suppose that $A/E$
 has quadratic growth; then by Lemma $2.4$, $P$ has a homomorphic image with linear growth. By the Small-Warfield theorem \cite{sw}, prime, finitely generated algebras with linear growth are Noetherian; this completes the proof.

\end{proof}

 \section{Constructing algebras satisfying given relations}

  Here we give the criteria for when an element $f\in A$ is in the ideal $E$.
 \begin{theorem} Let $n$ be a natural number. Suppose that subspaces $U(2^m),V(2^m)$ of $A(2^m)$ satisfying properties 1-4 (from Section $1$) were constructed for all $n$.
 Let $f\in A(k)$, where $2^{n}\leq k< 2^{n+1}$. Suppose that
   $$ AfA\cap A(2^{n+1})\subseteq A(2^n)U(2^n)+U(2^n)A(2^n).$$
  Suppose, moreover, that for all $i,j\ge0$ with $i+j=k-2^n$ we have $f\in
    A(i)U(2^n)A(j)+U^<(i)A(k-i)+A(k-j)U^>(j)$, with the sets
    $U^<(i),U^>(i)$ defined as in Section $1$.  Then $f\in  E$.
\end{theorem}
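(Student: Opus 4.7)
Unwinding the definition of $E(k)$ for $k\in[2^n,2^{n+1})$, the goal reduces to showing that every product $afb$ with $\deg(a)+k+\deg(b)=2^{n+2}$ lies in $U(2^{n+1})A(2^{n+1})+A(2^{n+1})U(2^{n+1})$. I would argue by cases on $p=\deg(a)$ and $q=\deg(b)$; the guiding principle is to exploit hypothesis 1 whenever $afb$ admits a central substring $\alpha f\beta$ of degree exactly $2^{n+1}$ that can be symmetrically flanked by blocks of $A(2^n)$, and to fall back on hypothesis 2 (which repositions the $U(2^n)$-window inside $f$ itself) only when no such symmetric flanking exists.

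The easy regime is when either $\max(p,q)\geq 2^{n+1}$ or $\min(p,q)\geq 2^n$. In the first situation one splits an outer $A(2^{n+1})$-block off $a$ (or $b$) and applies hypothesis 1 to the remaining piece, concluding via property 3. In the second, one peels off an outer $A(2^n)$ from each of $a$ and $b$, leaving an inner piece of degree $2^{n+1}$; hypothesis 1 decomposes this inner piece, and after multiplying the outer $A(2^n)$'s back on, property 3 consolidates the result into $U(2^{n+1})A(2^{n+1})+A(2^{n+1})U(2^{n+1})$.

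The delicate regime is $p<2^n$ (symmetric for $q<2^n$), where there is not enough room to pad both sides with $A(2^n)$. Here I would invoke hypothesis 2 with $j=2^{n+1}-q$ and $i=q+k-3\cdot 2^n$, nonnegativity of $i$ following from the forced inequality $q>3\cdot 2^n-k$. The point of this choice is that $p+i=2^n$ and $j+q=2^{n+1}$ are both pure powers of two, so Lemma 1.3 collapses $A(p)U^<(i)\subseteq U^<(2^n)=U(2^n)$ and $U^>(j)A(q)\subseteq U^>(2^{n+1})=U(2^{n+1})$, and each of the three summands of $afb$ produced by hypothesis 2 drops under a single application of property 3 into the target subspace. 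The principal obstacle is precisely this orchestration: the more obvious choice $i=2^{n+1}-p$ fails because it forces $j<0$ in this regime, and only the above choice forces both endpoint degrees to be pure powers of two, which is what makes all three summands from hypothesis 2 simultaneously reduce.
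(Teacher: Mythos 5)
Your proposal is correct and follows essentially the same route as the paper: the same three-way case split on $\deg(a),\deg(b)$, the same use of hypothesis 1 in the regime $\max\geq 2^{n+1}$ or $\min\geq 2^n$, and the same choice $i'=2^n-\deg(a)$, $j'=2^{n+1}-\deg(b)$ in hypothesis 2 for the remaining case (your $i'=q+k-3\cdot 2^n$ is the same quantity after substituting $p+q+k=2^{n+2}$), followed by Lemma~1.3 and property~3 exactly as in the paper.
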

\begin{proof} To show that $f\in E$, it suffices to prove that
  $A(i)fA(j)\subset T:=A(2^{n+1})U(2^{n+1})+U(2^{n+1})A(2^{n+1})$
  for all $i,j\in N$ with $i+j+k=2^{n+2}$.

  If $i\geq 2^{n+1}$, we get
  $A(i-2^{n+1})fA(j)\subseteq A(2^n)U(2^n)+U(2^n)A(2^n)\subseteq U(2^{n+1})$, so
  $A(i)fA(j)\subseteq T$. Similarly, if $j\geq 2^{n+1}$, we get
  $A(i)fA(j-2^{n+1})\subseteq U(2^{n+1})$, so $A(i)fA(j)\subseteq T$. If
  $i,j\geq 2^n$, then $A(i-2^n)fA(j-2^n)\subseteq A(2^n)U(2^n)+U(2^n)A(2^n)$,
  so $A(i)fA(j)\subseteq T$.

  If $i<2^n$ and $j<2^{n+1}$, then $f\in
  A(2^n-i)U(2^n)A(2^{n+1}-j)+U^<(2^n-i)A(2^{n+1}+2^n-j)+A(2^{n+1}-i)U^>(2^{n+1}-j)$
  by the assumption, so $A(i)fA(j)\subseteq
  A(2^n)U(2^n)A(2^{n+1})+T\subseteq T$, because
  $A(i)U^{<}(2^{n}-i)\subseteq U(2^{n})$ and $U^{>}(2^{n+1}-j)A(j)\subseteq U(2^{n+1})$,
  by Lemma $1.3$. The case $i<2^{n+1},j<2^n$ is
  handled similarly.  We may now conclude that $f=0$ holds in $A/E$.
\end{proof}

\section{Constructing $U(2^n),V(2^n)$ from $F(2^n)$}\label{ss:UV}

 In this section we introduce sets $F(2^{n})$, which will later be used to show that the algebra $A/E$ satisfies given relations.
 Roughly speaking, the relations which we want to hold in $A/E$ will be contained in the sets $F(2^{n})$. For more details about the properties and construction of the sets $F(2^{n})$, see Section $6$.

We begin with a modification of Theorem $3$ from \cite{ls}. Let $r_{n}$ be as in Theorem $0.1$. Let $Y=\{n:r_{n}\neq 0\}$ and a sequence of natural numbers $\{e(n)\} _{n\in Y}$ be given,
\begin{align}
  S&=\bigsqcup_{k\in Y}\{k-e(k)-1,\dots,k-1\}\label{def:S}
\end{align}
and assume that the union defining $S$ is disjoint and $S$ is a subset of natural numbers (we assume that zero is a natural number).
 Let $sup Y$ denote the largest element in $Y$ if $Y$ is finite.
\begin{theorem}\label{8props}

  Let $Y$, $S$ and $\{e(n)\}_{n\in Y}$ be as above. Let an integer $n$ be given.  Suppose
  that, for every $m\leq n$, we are given a subspace $F(2^m)\subseteq
  A(2^m)$ with $\dim F(2^m)\leq (2^{2^{e(m)}})^2-2$ and that, for every
  $m<n$, we are given subspaces $U(2^m),V(2^m)$ of $A(2^m)$ with
  \begin{itemize}
  \item [1.] $\dim V(2^m)=2$ if $m\notin S$ and $m\leq sup Y$ if $Y$ is finite;
  \item [2.]$\dim V(2^{m-e(m)-1+j})=2^{2^j}$ for all $m\in Z$ and all $0\leq
    j\leq e(m)$;\label{prop:2}
  \item [3.] $V(2^m)$ is spanned by monomials;
  \item [4.] $F(2^m)\subseteq U(2^m)$ for every $m\in Y$, and $F(2^{m})=0$
    for every $m\notin Y$;
  \item [5.] $V(2^m)\oplus U(2^m)=A(2^m)$;
  \item [6.] $A(2^{m-1})U(2^{m-1})+U(2^{m-1})A(2^{m-1})\subseteq U(2^m)$;
  \item [7.] $V(2^m)\subseteq V(2^{m-1})V(2^{m-1})$.
\item [8.] Moreover, if $Y$ is finite and $m>sup Y$, then $V(2^{m})=1$.
  \end{itemize}
  Then there exist subspaces $U(2^n),V(2^n)$ of $A(2^n)$ such that the
  extended collection $U(2^m),V(2^m)_{m\leq n}$ still satisfies
  conditions 1-8.
\end{theorem}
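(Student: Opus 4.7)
The approach is a direct construction: given $U(2^m),V(2^m)$ for $m<n$ and all $F(2^m)$ for $m\leq n$, I build $U(2^n)$ and $V(2^n)$ inside the splitting of $A(2^n)$ induced by the previous stage. Set $W:=A(2^{n-1})U(2^{n-1})+U(2^{n-1})A(2^{n-1})$. From the inductive splitting $A(2^{n-1})=V(2^{n-1})\oplus U(2^{n-1})$ of property 5, together with the fact that the multiplication map $A(2^{n-1})\otimes A(2^{n-1})\to A(2^n)$ is an isomorphism in a free algebra, one obtains the direct sum decomposition $A(2^n)=V(2^{n-1})V(2^{n-1})\oplus W$. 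Let $\pi\colon A(2^n)\to V(2^{n-1})V(2^{n-1})$ be the projection along $W$. By property 3 the space $V(2^{n-1})$ has a monomial basis, so products of these monomials form a monomial basis of $V(2^{n-1})V(2^{n-1})$ of cardinality $N:=(\dim V(2^{n-1}))^2$.

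Next I would assign the target dimension $d_n$ of $V(2^n)$ by a case analysis dictated by properties 1, 2, and 8. If $n\in S$, write $n=k-e(k)-1+j$ for the unique $k\in Y$ and $0\leq j\leq e(k)$ and set $d_n=2^{2^j}$; if $n\in Y$ set $d_n=2$; if $n\notin Y\cup S$ and either $Y$ is infinite or $n\leq\sup Y$, set $d_n=2$; if $Y$ is finite and $n>\sup Y$, set $d_n=1$. In every case one verifies the inequality $d_n+\dim F(2^n)\leq N$. The only nontrivial case is $n\in Y$: here $n-1=k-1$ is the top of an $S$-block, so property 2 forces $\dim V(2^{n-1})=2^{2^{e(n)}}$, giving $N=(2^{2^{e(n)}})^2$, while the hypothesis on $\dim F(2^n)$ forces $\dim F(2^n)\leq N-2$, leaving exactly the $d_n=2$ dimensions of slack.

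Let $F':=\pi(F(2^n))\subseteq V(2^{n-1})V(2^{n-1})$, a subspace of dimension at most $\dim F(2^n)$. Applying the exchange lemma to the monomial basis of $V(2^{n-1})V(2^{n-1})$ together with $F'$ yields a subset of $N-\dim F'$ of these monomials whose span is complementary to $F'$ inside $V(2^{n-1})V(2^{n-1})$. Since $d_n\leq N-\dim F'$, pick any $d_n$ of these monomials and let $V(2^n)$ be their $K$-linear span. Define $U(2^n)$ to be any linear complement of $V(2^n)$ in $A(2^n)$ that contains $F(2^n)+W$; such a complement exists because $V(2^n)\cap(F(2^n)+W)=V(2^n)\cap F'=0$, using that $W=\ker\pi$ and $V(2^n)\subseteq V(2^{n-1})V(2^{n-1})$.

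Properties 1--8 are then immediate: property 3 holds since $V(2^n)$ is monomial-spanned by construction, 4 and 5 follow from the choice of $U(2^n)$, 6 from $W\subseteq U(2^n)$, 7 from $V(2^n)\subseteq V(2^{n-1})V(2^{n-1})$, and 1, 2, 8 from the chosen value of $d_n$. The main obstacle is the simultaneous arrangement of three constraints on $V(2^n)$---that it be monomial-spanned, contained in $V(2^{n-1})V(2^{n-1})$, and transverse to $F(2^n)$ modulo $W$---and the hypothesis $\dim F(2^m)\leq(2^{2^{e(m)}})^2-2$ is calibrated to leave precisely the two dimensions of slack needed in the tight case $n\in Y$.
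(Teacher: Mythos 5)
Your construction is sound and, as far as I can tell, reproduces in a self-contained and unified way what the paper simply cites to [ls] and [sb]. The paper's own proof of Theorem~4.1 is almost entirely a reference: for properties~1--7 it appeals to Theorem~3 of [ls] (with $e(n)$ in place of $\log n$) as worked out in [sb], and then only spells out the tail regime $m>\sup Y$ explicitly, taking $V(2^{t+1})=Km_1m_1$, $U(2^{t+1})=(U(2^t)+Km_2)A(2^t)+A(2^t)(U(2^t)+Km_2)$, and iterating. Your argument instead handles every index $n$ by the same mechanism --- the decomposition $A(2^n)=V(2^{n-1})V(2^{n-1})\oplus W$ with $W=A(2^{n-1})U(2^{n-1})+U(2^{n-1})A(2^{n-1})$, a Steinitz exchange inside the monomial basis of $V(2^{n-1})V(2^{n-1})$ to dodge $\pi(F(2^n))$, and a complement of $V(2^n)$ containing $F(2^n)+W$. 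The arithmetic $d_n+\dim F(2^n)\le N=(\dim V(2^{n-1}))^2$ is indeed tight exactly when $n\in Y$, which is what the bound $\dim F(2^n)\le(2^{2^{e(n)}})^2-2$ is calibrated for. One benefit of your route is that it visibly needs no algebraic closure --- the Steinitz exchange works over any field --- whereas the paper's Nullstellensatz argument is confined to Lemma~6.3 in the construction of $F(2^n)$, not to Theorem~4.1; your write-up makes that separation explicit.

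Two points you should tighten. First, the case analysis for $d_n$ is not phrased as a partition: $n\in S$ and $n\in Y$ can both hold (precisely when $n=k-e(k)-1$ for the next $k\in Y$), and you should note that disjointness of the blocks together with $e(n)\ge1$ (so that $n-1\in S_n$) forces $j=0$ in that overlap, making the two assignments of $d_n$ agree. Second, you use implicitly that $n-1\in S_n$ whenever $n\in Y$, so that $\dim V(2^{n-1})=2^{2^{e(n)}}$; this again rests on $e(n)\ge1$, which is an assumption made in Section~6 rather than in the bare statement of Theorem~4.1, so it is worth saying where it comes from. Neither issue is a gap in substance, but both should be spelled out.
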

\begin{proof}
  For properties $1-7$, the proof is the same as in \cite{ls} when we use $e(n)$ instead of $log(n)$. The detailed proof with the same notation can be found in \cite{sb}. We can use this proof to
 define inductively $V(2^{i})$, $U(2^{i})$ for all $i\leq sup Y$. Denote $t=sup Y$. By definition of $S$ and $Y$, and by property $1$ from Theorem $4.1$, we get that $V(2^{t})=Km_{1}+Km_{2}$ for some $m_{1}$ and $m_{2}$ in $A(2^{t})$. Then define $V(2^{t+1})=Km_{1}m_{1}$, $U(2^{t+1})=(U(2^{t})+Km_{2})A(2^{t})+
 A(2^{t})(U(2^{t})+Km_{2})$.
  Now define inductively for all $i>0$, $V(2^{t+i+1})=V(2^{t+i})V(2^{t+i})=Km_{1}^{2^{i+1}}$ and
 $U(2^{t+i+1})=U(2^{t+i})A(2^{t+i})+A(2^{t+i})U(2^{t+i})$.
  In this way we constructed sets $U(2^{n}), V(2^{n})$ for all $n>t=sup Y$, satisfying property 8, and properties 3, 5, 6, 7. We set $F(2^{m})=0$ for all $m>t$, so property 4 holds. The properties 1, 2 don't apply for $m>t$. Recall that we already constructed sets $V(2^{i})$, $U(2^{i})$ for all $i\leq sup Y$, using the same proof as in  \cite{ls} or \cite{sb}. The proof is finished.
 \end{proof}
\begin{theorem} The above theorem is also true  when, instead of property $8$, we put the following property:
\begin{itemize}
  \item [8'.] If $Y$ is finite and $k\geq sup Y$, then $V(2^{k+i+1})=V(2^{k+i})V(2^{k+i})$ for all $i\geq 0$.
\end{itemize}
\end{theorem}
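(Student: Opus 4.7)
The plan is to recycle the proof of Theorem~4.1 verbatim through the construction of $U(2^m), V(2^m)$ for all $m \leq t := \sup Y$, where the argument from \cite{ls, sb} already delivers subspaces satisfying properties~1--7 at those indices. The only point at which we depart from Theorem~4.1 is the extension past $t$: there, instead of forcing $V$ down to dimension one as required by property~8 (which was achieved by picking $V(2^{t+1})=Km_1 m_1$ and absorbing $m_2$ into $U$), we keep $V$ as large as the inductive recipe allows.

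Concretely, for every $m \geq t$ set
\[
F(2^{m+1}) = 0, \qquad V(2^{m+1}) = V(2^m)V(2^m), \qquad U(2^{m+1}) = U(2^m)A(2^m) + A(2^m)U(2^m).
\]
Then property~8' holds by definition; properties~6 and 7 hold with equality; property~4 is vacuous since every $F(2^m)$ with $m > t$ is zero by our choice; and property~3 is preserved because the product of two monomial-spanned subspaces is again spanned by monomials. Properties~1 and 2 impose no requirement on indices $m > t$, so nothing needs to be checked there. At the base index $m = t$ itself, property~1 gives $\dim V(2^t) = 2$ since $t \notin S$ (the set $S$ contains only integers strictly below some element of $Y$, hence is disjoint from $\sup Y$), so the new construction begins from a genuine two-dimensional monomial space.

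The only substantive verification is that property~5 propagates, i.e.\ that $A(2^{m+1}) = V(2^{m+1}) \oplus U(2^{m+1})$ given inductively that $A(2^m) = V(2^m) \oplus U(2^m)$. This follows immediately from the freeness of $A$: the multiplication map $A(2^m) \otimes_K A(2^m) \to A(2^{m+1})$ is a linear isomorphism, so tensoring the given decomposition with itself yields
\[
A(2^{m+1}) = V(2^m)V(2^m) \,\oplus\, V(2^m)U(2^m) \,\oplus\, U(2^m)V(2^m) \,\oplus\, U(2^m)U(2^m),
\]
in which the first summand is $V(2^{m+1})$ and the last three summands combine to give $U(2^{m+1})$. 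I do not expect any real obstacle: the modified extension is strictly simpler than the one carried out in Theorem~4.1, and the whole point is that we simply avoid the artificial collapse of $V$ to a line, keeping instead the natural square of the preceding monomial space.
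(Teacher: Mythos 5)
Your proposal is correct and matches the paper's own proof of Theorem 4.2: both carry out the Theorem 4.1 construction up through $t=\sup Y$ and then extend by $V(2^{m+1})=V(2^m)V(2^m)$, $U(2^{m+1})=U(2^m)A(2^m)+A(2^m)U(2^m)$, $F(2^{m+1})=0$ for $m\geq t$. The only addition you make beyond the paper's text is spelling out the freeness-of-$A$ argument for why property 5 propagates, which the paper leaves implicit.
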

\begin{proof} Define inductively $V(2^{i})$, $U(2^{i})$ for all $i\leq sup Y$ as in Theorem $4.1$. Denote $t=sup Y$. By property $1$ from Theorem $4.1$, $V(2^{t})=Km_{1}+Km_{2}$ for some $m_{1}$ and $m_{2}$
 in $A(2^{t})$.  Now define inductively, for all $i\geq 0$,  $V(2^{t+i+1})=V(2^{t+i})V(2^{t+i})$ and
 $U(2^{t+1+1})=U(2^{t+i})A(2^{t+i})+A(2^{t+i})U(2^{t+i})$. In this way we constructed sets $U(2^{n}), V(2^{n})$ for all $n>t=sup Y$, satisfying property 8', and properties 3, 5, 6, 7. We set $F(2^{m})=0$ for all $m>t$, so property 4 holds. The properties 1, 2 don't apply for $m>t$. Recall that we already constructed sets $V(2^{i})$, $U(2^{i})$ for all $i\leq sup Y$, using the same proof as in  \cite{ls} or \cite{sb}. The proof is finished.
 \end{proof}
\section{Growth of subspaces }

In this section we generalize results from Section $2$ in \cite{sb}.
  To lighten
notation, we write $[X]=\dim X$ for the dimension of a
subspace $X\subseteq A$.
Suppose that sets $V(2^{n})$, $U(2^{n})$, $F(2^{n})$ satisfy properties $1-8$ of Theorem $4.1$, with $\{e(i)\}_{i\in Y}$, $S$, $Y$ defined as in Section $4$.
 The results from this section will be mainly used in Section $6$. We begin with a lemma about the dimensions $V^>(k)$ and $V^<(k)$,
continuing with the notation from \cite{sb}.
\begin{lemma}
  Let $\alpha $ be a natural number with binary decomposition $\alpha
  = 2^{p_1} + \cdots +2^{p_{t}}$. Suppose $p_i\notin S$, for all
  $i=1,\ldots, t$. Then $[V^>(\alpha)]\leq 2\alpha$.
 \end{lemma}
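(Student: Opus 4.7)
The plan is to argue directly from the definition of $V^>(\alpha)$ together with the dimension hypotheses built into the properties of Theorem $4.1$. Recall that by the definitions in Section $1$, $V^>(\alpha)=V(2^{p_t})V(2^{p_{t-1}})\cdots V(2^{p_1})$, so by submultiplicativity of dimension under products of subspaces,
\[
  [V^>(\alpha)] \;\leq\; \prod_{i=1}^{t} [V(2^{p_i})].
\]
Hence everything reduces to bounding $[V(2^{p_i})]$ for each $i$.

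Next, I would use the hypothesis $p_i\notin S$ to argue that $[V(2^{p_i})]\leq 2$ for every $i$. There are two cases according to the case analysis enforced by properties $1$ and $8$ of Theorem $4.1$. If $Y$ is infinite, or if $Y$ is finite and $p_i\leq\sup Y$, then property $1$ gives $[V(2^{p_i})]=2$. If $Y$ is finite and $p_i>\sup Y$, then property $8$ (in the form used in the construction at the end of the proof of Theorem $4.1$) forces $V(2^{p_i})$ to be one-dimensional. Either way, $[V(2^{p_i})]\leq 2$, so
\[
  [V^>(\alpha)] \;\leq\; 2^{t}.
\]

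Finally, I would translate $2^t$ back into a bound in terms of $\alpha$. Since $0\leq p_1<p_2<\cdots<p_t$ are integers, we have $p_t\geq t-1$, hence $t\leq p_t+1$. On the other hand, $\alpha=2^{p_1}+\cdots+2^{p_t}\geq 2^{p_t}$, so $2\alpha\geq 2^{p_t+1}\geq 2^{t}$. Combining with the previous display yields $[V^>(\alpha)]\leq 2^t\leq 2\alpha$, as claimed.

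There is essentially no real obstacle here: the content is a short chain of inequalities after invoking the correct dimension properties. The only mildly subtle point is remembering that the condition $p_i\notin S$ does not in itself say $[V(2^{p_i})]=2$ in the case that $Y$ is finite and $p_i$ exceeds $\sup Y$; one has to separately appeal to property $8$ (i.e.\ the construction at the end of the proof of Theorem $4.1$) to get $[V(2^{p_i})]=1$ in that range. Once this case distinction is made, the rest is just the elementary estimate $t\leq p_t+1$ applied to the binary expansion of $\alpha$.
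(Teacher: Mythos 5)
Your proof is correct and follows essentially the same route as the paper's: bound each factor $[V(2^{p_i})]$ by $2$ using the hypothesis $p_i\notin S$, and then estimate $2^t\leq 2\alpha$ from the binary expansion. The paper condenses the last step to $2^t\leq 2^{\log\alpha+1}\leq 2\alpha$ and simply cites "by assumption" for $[V(2^{p_i})]\leq 2$; you have merely unpacked both of these, including the (correct and slightly more careful) observation that when $Y$ is finite and $p_i>\sup Y$ one must invoke property $8$ rather than property $1$.
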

\begin{proof} The same as the proof of Lemma $2.1$ in \cite{sb}, but we repeat it for the convenience of the reader.
  If $p_i\notin S$, then $[V(2^{p_i})]\leq 2$ by assumption, so
  \[[V^>(\alpha)]=\prod_{i=1}^t[V(2^{p_i})]=2^t\leq 2^{\log(\alpha)+1}\leq 2\alpha.\qedhere\]
\end{proof}

\begin{lemma}\label{lem:3.2}
  Let $\alpha$ be a natural number with binary decomposition $\alpha
  = 2^{p_1} + \cdots +2^{p_{t}}$. Suppose that there is $k\in Y$ such
  that $p_i\in\{k-e(k)-1,\dots, k-1\}$ for all $i=1,\dots,t$. Then
  $[V^>(\alpha)]\leq 2^{2^{e(k)+1}}$. More precisely, $[V^>(\alpha)]=2^{\alpha/2^{k-e(k)-1}}$.
\end{lemma}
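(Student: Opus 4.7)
The plan is to compute $[V^>(\alpha)]$ exactly by unwinding the definition and then bound the exponent combinatorially. Since $V^>(\alpha)=V(2^{p_t})\cdots V(2^{p_1})$ and each $V(2^{p_i})$ is spanned by monomials (property 3 of Theorem 4.1), a basis of the product is obtained by concatenating basis monomials of the factors; concatenation of monomials in the free algebra $A$ is injective, so no collapse occurs and
\[
  [V^>(\alpha)] \;=\; \prod_{i=1}^{t}[V(2^{p_i})].
\]
This will be the first step I would record explicitly, since it is the only place where one uses that the factors are monomial spaces inside a free algebra.

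Next, I would pass to the index $j_i:=p_i-(k-e(k)-1)\in\{0,1,\dots,e(k)\}$. The hypothesis $p_i\in\{k-e(k)-1,\dots,k-1\}$ guarantees $j_i$ lies in this range, and the $j_i$ are pairwise distinct since the $p_i$ (being exponents of the binary decomposition of $\alpha$) are distinct. Property 2 of Theorem 4.1 applied at $m=k\in Y$ then gives $[V(2^{p_i})]=2^{2^{j_i}}$, so
\[
  [V^>(\alpha)] \;=\; \prod_{i=1}^{t}2^{2^{j_i}} \;=\; 2^{\sum_{i=1}^{t}2^{j_i}}.
\]

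For the precise value, I would factor out $2^{k-e(k)-1}$ from the binary decomposition of $\alpha$: since $2^{p_i}=2^{k-e(k)-1}\cdot 2^{j_i}$,
\[
  \alpha \;=\; \sum_{i=1}^{t}2^{p_i} \;=\; 2^{k-e(k)-1}\sum_{i=1}^{t}2^{j_i},
\]
whence $\sum_{i=1}^{t}2^{j_i}=\alpha/2^{k-e(k)-1}$, yielding the stated equality $[V^>(\alpha)]=2^{\alpha/2^{k-e(k)-1}}$.

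The upper bound $[V^>(\alpha)]\leq 2^{2^{e(k)+1}}$ then drops out: because $\{j_1,\dots,j_t\}$ is a subset of $\{0,1,\dots,e(k)\}$ of distinct integers, $\sum_{i=1}^{t}2^{j_i}\leq \sum_{j=0}^{e(k)}2^{j}=2^{e(k)+1}-1<2^{e(k)+1}$, so the exponent in the formula above is bounded by $2^{e(k)+1}$. Overall there is no real obstacle here; the only point requiring any care is the justification that dimensions multiply across the product $V(2^{p_t})\cdots V(2^{p_1})$, which relies on property 3 together with the freeness of $A$.
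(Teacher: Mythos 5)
Your proposal is correct and follows essentially the same route as the paper's proof: both compute $[V^>(\alpha)]=\prod_{i=1}^t[V(2^{p_i})]$, invoke property~2 of Theorem~4.1 to get $[V(2^{p_i})]=2^{2^{p_i-(k-e(k)-1)}}$, sum the exponents to obtain $\alpha/2^{k-e(k)-1}$, and bound this by $2^{e(k)+1}$. The only difference is that you make the multiplicativity $[V^>(\alpha)]=\prod_i[V(2^{p_i})]$ explicit via property~3 and freeness of $A$, whereas the paper uses it silently (it is implicit in Lemma~5.1); this is a welcome clarification rather than a departure.
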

\begin{proof} The same as the proof of Lemma $2.2$ in \cite{sb}. Recall that, by Theorem~\ref{8props}(2), we have
  $[V(2^i)]=2^{2^{i-(k-e(k)-1)}}$ for all
  $i\in\{k-e(k)-1,\dots,k-1\}$. Then
  \begin{align*}
    \log[V^>(\alpha)]&=\log\prod_{i=1}^t[V(2^{p_i})]=\log\prod_{i=1}^t2^{2^{p_i-(k-e(k)-1)}}\\
    &= \sum_{i=1}^t2^{p_i-(k-e(k)-1)}=\frac{\alpha}{2^{k-e(k)-1}}\\
    &\leq 2^{e(k)+1}.\qedhere
  \end{align*}
\end{proof}

\begin{proposition}\label{prop:bdV>}
  Let $\alpha = 2^{p_1} + \cdots + 2^{p_{t}}$ be a natural number in the binary form.
   Then $[V^>(\alpha)]<2\alpha \prod_{i\leq m, i\in Y }2^{2^{e(i)+1}}$, where $m$ is maximal
  such that $\sum_{p_i\in \{m-e(m)-1,\dots, m-1\}}2^{p_{i}}$ is nonzero.
  \end{proposition}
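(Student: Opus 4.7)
The plan is to bound $[V^>(\alpha)]$ by the product of dimensions of the individual factors, and then partition the indices $\{1,\ldots,t\}$ according to whether each $p_i$ lies in the set $S$ defined in Section 4.

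First I would use subadditivity of dimension for products to get $[V^>(\alpha)]=[V(2^{p_t})\cdots V(2^{p_1})]\leq \prod_{i=1}^t[V(2^{p_i})]$. Then I would partition the index set $\{1,\ldots,t\}$ into $I_0=\{i:p_i\notin S\}$ and, for each $k\in Y$, $I_k=\{i:p_i\in\{k-e(k)-1,\ldots,k-1\}\}$. Because $S=\bigsqcup_{k\in Y}\{k-e(k)-1,\ldots,k-1\}$ is a disjoint union by hypothesis, this really is a partition, and the product of dimensions splits accordingly.

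Next I would handle the two types of factors using the two preceding lemmas. For $i\in I_0$ property~1 of Theorem~4.1 yields $[V(2^{p_i})]\leq 2$, so the contribution from $I_0$ is at most $2^{|I_0|}$; since any $|I_0|$ distinct powers of $2$ sum to at least $2^{|I_0|}-1$, we obtain $2^{|I_0|}\leq 1+\sum_{i\in I_0}2^{p_i}\leq 2\alpha$ (this is essentially Lemma~5.1 applied to the sub-decomposition of $\alpha$ given by $I_0$). For each $k\in Y$ with $I_k\neq\emptyset$, the exponents $\{p_i:i\in I_k\}$ form the binary decomposition of a number whose support lies in $\{k-e(k)-1,\ldots,k-1\}$, so Lemma~5.2 gives $\prod_{i\in I_k}[V(2^{p_i})]\leq 2^{2^{e(k)+1}}$.

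Multiplying the bounds produces $[V^>(\alpha)]\leq 2\alpha\prod_{k\in Y,\,I_k\neq\emptyset}2^{2^{e(k)+1}}$. Observing that $I_k\neq\emptyset$ is precisely the condition that $\sum_{p_i\in\{k-e(k)-1,\ldots,k-1\}}2^{p_i}\neq 0$, the maximal such $k$ is the integer $m$ in the statement; enlarging the product to run over all $i\in Y$ with $i\leq m$ only inserts extra factors $2^{2^{e(i)+1}}\geq 2$, which preserves the inequality and yields the claimed bound. The only real obstacle is keeping the bookkeeping of the partition $\{1,\ldots,t\}=I_0\sqcup\bigsqcup_{k\in Y}I_k$ consistent with the factorization of the product, and noting that the strict inequality in the conclusion can be secured by observing that either $I_0\neq\emptyset$ (so the bound $2^{|I_0|}\leq 2\alpha$ is strict unless $\alpha$ has a very special form, and in the degenerate cases the enlargement to all $i\leq m$ gives strictness) or the product over $k$ contains a factor strictly larger than needed.
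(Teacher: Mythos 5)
Your proposal follows the same route as the paper's proof: partition the binary digits of $\alpha$ according to the blocks $S_k=\{k-e(k)-1,\dots,k-1\}$ of $S$, bound the contribution of the digits outside $S$ by $2\alpha$ via Lemma 5.1, and bound the contribution of each block via Lemma 5.2. The paper packages this by writing $\alpha=\gamma+\delta$ with $\gamma=\sum_{k\le m}\alpha_k$, $\delta=\sum_{p_i\notin S}2^{p_i}$ and factoring $[V^>(\alpha)]=[V^>(\gamma)][V^>(\delta)]$, but that is the same bookkeeping you carry out at the level of the index partition $\{1,\dots,t\}=I_0\sqcup\bigsqcup_k I_k$.

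The one soft spot is your treatment of the strict inequality; the informal case split (``either $I_0\neq\emptyset$ \dots or the product contains a factor strictly larger than needed'') is not really a proof. The clean source of strictness, and the one the paper implicitly uses, is that $I_m\neq\emptyset$ by the definition of $m$, so the ``more precisely'' clause of Lemma 5.2 gives $\prod_{i\in I_m}[V(2^{p_i})]=2^{\alpha_m/2^{m-e(m)-1}}$ with $\alpha_m\le 2^{m-1}+\cdots+2^{m-e(m)-1}=2^m-2^{m-e(m)-1}<2^m$, forcing this factor to be strictly less than $2^{2^{e(m)+1}}$. (Alternatively, since $\alpha_m\neq 0$, the digits in $I_0$ sum to at most $\alpha-1$, so $2^{|I_0|}\le 1+(\alpha-1)<2\alpha$ is automatically strict, contrary to your remark that strictness might fail for a ``very special form'' of $\alpha$.) Supplying that one sentence closes the gap and makes your argument essentially identical to the paper's.
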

\begin{proof}
  Write $\alpha = 2^{p_1} + \cdots + 2^{p_{t}}$ in binary. Write again
  $S_k=\{k-e(k)-1,\dots,k-1\}$. For all $k\in Y$, set
  $\alpha_k=\sum_{p_i\in S_k}2^{p_i}$. Let $m$ be maximal such that
  $\alpha _{m}\neq 0$.
   Set $\gamma=\sum_{k\leq m}\alpha_k$ and
  $\delta=\sum_{p_i\notin S}2^{p_i}$ so that
  $\alpha=\gamma+\delta $. By definition of the sets $V^>(i)$, we have
  $[V^>(\alpha)]=[V^>(\gamma)][V^>(\delta)]$.
    By Lemma $5.2$,
  $$
  [V^>(\gamma)]= \prod_{k\leq m, k\in Y}[V^>(\alpha_k)]<
  \prod_{k\leq  m, k\in Y}2^{2^{e(k)+1}}.
  $$
  Finally, by Lemma $5.1$, we have $[V^>(\delta)]\leq
  2\alpha$. Putting everything together, we get
  $[V^>(\alpha)]<2\alpha \prod_{i\leq m, i\in Y}2^{2^{e(i)+1}}.$
\end{proof}
\begin{lemma}\label{lem:3.4}
  Let $\alpha, \beta $ be natural numbers such that $\alpha+\beta \leq
  2^{n-1}+2^{n-2}$, for some $n\in Y$. Then
 $$[V^<(\alpha)][V^>(\beta)]\leq 2^{2n}(\prod_{k<n, k\in Y}2^{2^{e(k)+1}})^{2}[V(2^{n-1})]^{2}/2^{2^{e(n)-1}}.$$
\end{lemma}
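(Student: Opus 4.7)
The strategy is to decompose $\alpha$ and $\beta$ according to their binary expansions, grouping positions by membership in the (disjoint) sets $S_k=\{k-e(k)-1,\dots,k-1\}$ for $k\in Y$. Since $\alpha+\beta\leq 2^{n-1}+2^{n-2}<2^n$, all positions used lie in $\{0,\dots,n-1\}$. Writing $\alpha=\alpha'+\sum_{k\in Y}\alpha_k$ (and similarly for $\beta$), with $\alpha_k$ collecting the digits of $\alpha$ at positions in $S_k$ and $\alpha'$ those at positions outside $S$, the definitions of $V^<$ and $V^>$ give $[V^<(\alpha)]=[V^<(\alpha')]\prod_{k\in Y}[V^<(\alpha_k)]$ and likewise for $V^>(\beta)$. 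I would split the resulting product into three parts: an off-$S$ factor, the factors for $k<n$ with $k\in Y$, and the single level-$n$ factor.

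For the off-$S$ factor, Lemma~$5.1$ gives $[V^<(\alpha')]\leq 2\alpha$ and $[V^>(\beta')]\leq 2\beta$; since $\alpha+\beta\leq 3\cdot 2^{n-2}$, AM--GM yields $4\alpha\beta\leq(\alpha+\beta)^{2}\leq 9\cdot 2^{2n-4}<2^{2n}$, which supplies the leading $2^{2n}$ in the claim. For each $k<n$ with $k\in Y$, Lemma~$5.2$ gives $[V^<(\alpha_k)]\leq 2^{2^{e(k)+1}}$ and analogously for $V^>(\beta_k)$, so multiplying in pairs produces exactly $\bigl(\prod_{k<n,\, k\in Y}2^{2^{e(k)+1}}\bigr)^{2}$.

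The crux is the level-$n$ factor, where the estimate must be tight. I would apply the exact formula from Lemma~$5.2$ to obtain $[V^<(\alpha_n)][V^>(\beta_n)]=2^{(\alpha_n+\beta_n)/2^{n-e(n)-1}}$. Since $\alpha_n+\beta_n\leq\alpha+\beta\leq 3\cdot 2^{n-2}$, the exponent is at most $3\cdot 2^{e(n)-1}$. A direct computation, using $[V(2^{n-1})]=2^{2^{e(n)}}$ from property~$2$ of Theorem~$4.1$, shows $[V(2^{n-1})]^{2}/2^{2^{e(n)-1}}=2^{2^{e(n)+1}-2^{e(n)-1}}=2^{3\cdot 2^{e(n)-1}}$, so the two expressions match exactly. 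Multiplying the three factors then yields the claimed inequality. The main obstacle I anticipate is this arithmetic match at level~$n$: the specific constants $2^{n-1}+2^{n-2}$ in the hypothesis are chosen precisely so that $(\alpha_n+\beta_n)/2^{n-e(n)-1}$ fits into $3\cdot 2^{e(n)-1}=2^{e(n)+1}-2^{e(n)-1}$, and any weaker hypothesis on $\alpha+\beta$ would break this matching.
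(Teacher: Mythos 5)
Your proof is correct and follows essentially the same route as the paper: decompose by binary positions into the off-$S$ part, the $S_k$-parts for $k<n$, and the $S_n$-part, then apply Lemmas 5.1 and 5.2 and observe that $2^{(\alpha_n+\beta_n)/2^{n-e(n)-1}}\leq 2^{3\cdot 2^{e(n)-1}}=[V(2^{n-1})]^2/2^{2^{e(n)-1}}$. The only cosmetic difference is that the paper bounds $[V^>(\alpha)]$ once and invokes $[V^<(\alpha)]=[V^>(\alpha)]$ to symmetrize, whereas you decompose $\alpha$ and $\beta$ in parallel; the arithmetic is identical.
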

\begin{proof}
  Write $\alpha = 2^{p_1} + \cdots +2^{p_t}$ in binary. Write again
  $S_k=\{k-e(k)-1,\dots,k-1\}$ and $\alpha_k=\sum_{p_i\in
    S_k}2^{p_i}$. Set now $\gamma=\sum_{k<n}\alpha_k$ and
  $\delta=\sum_{p_i\notin S}2^{p_i}$; we get
  $\alpha=\gamma+\delta+\alpha_n$, and by definition of the sets
  $V^>(n)$, we get $[V^>(\alpha)]=[V^>(\gamma)][V^>(\delta)][V^>(\alpha_n)]$. By previous Lemmas,
  \[[V^>(\gamma)]= \prod_{k<n, k\in Y}[V^>(\alpha_k)] <\prod_{k<n, k\in Y}2^{2^{e(k)+1}} .\]
  By Lemma $5.1$, we get
  \[[V^>(\delta)]\leq 2\delta\leq 2\alpha.\]
   Lemma~\ref{lem:3.2} gives
  \[[V^>(\alpha_n)]=2^{\alpha_n / 2^{n-e(n)-1}}\leq 2^{\alpha/2^{n-e(n)-1}}.\]
  Therefore,
  \[[V^>(\alpha)]\leq   2\alpha  (\prod_{k<n, k\in Y}2^{2^{e(k)+1}}) 2^{\alpha / 2^{n-e(n)-1}}.\]\\
By the definition of sets $V^<$ and $V^>$, we get
  $[V^<(\alpha)]=[V^>(\alpha)]$, so

  \[[V^<(\alpha)][V^>(\beta)]\leq 4(\alpha\beta)(\prod_{k<n, k\in Y}2^{2^{e(k)+1}})^{2}  2^{\frac{\alpha+\beta}{2^{n-e(n)-1}}}.\]\\
Since $\alpha+\beta \leq 2^{n-1}+2^{n-2}$, so $\alpha\beta\leq 2^{2n-2}$. Observe that
$2^{\frac{\alpha+\beta}{2^{n-e(n)-1}}}\leq 2^{\frac{2^{n-1}+2^{n-2}}{2^{n-e(n)-1}}}=2^{2^{e(n)}+2^{e(n)-1}}$,
 and recall that
 $2^{2^{e(n)}+2^{e(n)-1}}=[V(2^{n-1})][V(2^{n-2})]$. It follows that
$$[V^<(\alpha)][V^>(\beta)]\leq 2^{2n}(\prod_{k<n, k\in Y}2^{2^{e(k)+1}})^{2}[V(2^{n-1})][V(2^{n-2})].$$
 Recall that $2^{2^{e(n)-1}}=[V(2^{n-2})]$ and $[V(2^{n-2})]^{2}=[V(2^{n-2})]$. We now see that
 $$[V^<(\alpha)][V^>(\beta)]\leq 2^{2n}(\prod_{k<n, k\in Y}2^{2^{e(k)+1}})^{2}[V(2^{n-1})]^{2}/2^{2^{e(n)-1}}.$$
\end{proof}
\section{Constructing sets $F(2^n)$}\label{ss:F}

 In this section we assume that $r_{i}$ and $Y$, $f_{1}, f_{2}, \ldots $ are as in Theorem $0.1$.
 We moreover assume that there are natural numbers $\{e(i)\}_{i\in Y}$ which  satisfy the following conditions for all $n\in Y$:
 $1\leq e(n)\leq n-1$, sets $S_{n}=\{n-1-e(n), n-1\}$ are disjoint and
 $r_{n}\leq 2^{t(n)}$ where $t(n)=2^{e(n)-1}-3n-4-\sum_{k\in Y, k<n}2^{e(k)+2}.$

 We will construct sets $F(2^{n})\subseteq A(2^{n})$ which let us apply Theorem $4.1$. We begin with the following
lemma, which generalizes Lemma $3.1$ from \cite{sb}.
\begin{lemma} Let $n$ be a natural number.
Suppose that, for all $m<n$, we constructed sets $V(2^{m})$, $U(2^{m})$ which satisfy
properties $1-8$ of Theorem $4.1$, with
$\{e(i)\}_{i\in Y}$ defined as above.
  Consider all $f\in A(k)\cap \{f_{1}, \ldots ,f_{\xi}\}$ with
  $2^n+2^{n-1}\leq k\leq 2^n+2^{n-1}+2^{n-2}$.
  Then there exists a linear $K$-space $F'(2^n)\subseteq A(2^n)$ with the
  following properties:
  \begin{itemize}
  \item $0<\dim F'(2^n)\leq \frac12\dim V(2^{n-1})^2$;
  \item for all $i,j\ge0$ with $i+j=k-2^n$ and for every $f\in A(k)\cap \{f_{1}, \ldots ,f_{\xi}\}$, we have $f\in
    A(i)F'(2^n)A(j)+U^<(i)A(k-i)+A(k-j)U^>(j)$ with the sets
    $U^<(i),U^>(i)$ defined in Section $1$.
  \end{itemize}
 \end{lemma}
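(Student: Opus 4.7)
The plan is to decompose each relation $f\in\{f_{1},\dots,f_{\xi}\}\cap A(k)$ with $2^{n}+2^{n-1}\le k\le 2^{n}+2^{n-1}+2^{n-2}$, for every split $(i,j)$ with $i+j=k-2^{n}$, modulo the ``degenerate'' subspace $U^{<}(i)A(k-i)+A(k-j)U^{>}(j)$, and to collect the resulting middle pieces in $A(2^{n})$ as the generating set of $F'(2^{n})$.

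First I would use Lemma~$1.3$, which gives $A(i)=U^{<}(i)\oplus V^{<}(i)$ and $A(j)=U^{>}(j)\oplus V^{>}(j)$. Combining this with the tensor-product decomposition of the free algebra $A(k)\cong A(i)\otimes A(2^{n})\otimes A(j)$, one checks that a complement of $U^{<}(i)A(k-i)+A(k-j)U^{>}(j)$ inside $A(k)$ is $V^{<}(i)\,A(2^{n})\,V^{>}(j)$. Choose monomial bases $\{v_{l}\}$ of $V^{<}(i)$ and $\{w_{l'}\}$ of $V^{>}(j)$, which exist by property~1 of Theorem~$4.1$. Then for every $f$ in the specified range there are unique elements $b_{l,l'}^{(f,i,j)}\in A(2^{n})$ satisfying
\[f\equiv\sum_{l,l'}v_{l}\,b_{l,l'}^{(f,i,j)}\,w_{l'}\pmod{U^{<}(i)A(k-i)+A(k-j)U^{>}(j)}.\]

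Next, I would define $F'(2^{n})$ to be the $K$-linear span of all the $b_{l,l'}^{(f,i,j)}$ as $f$ ranges over the (at most $r_{n}$) admissible relations, $(i,j)$ over all splits of $k-2^{n}$, and $(l,l')$ over the chosen bases; if this span is zero, enlarge it by any single monomial of degree $2^{n}$ to enforce $\dim F'(2^{n})>0$. Since $v_{l}\in V^{<}(i)\subseteq A(i)$ and $w_{l'}\in V^{>}(j)\subseteq A(j)$, the identity above immediately gives $f\in A(i)F'(2^{n})A(j)+U^{<}(i)A(k-i)+A(k-j)U^{>}(j)$, which is the second property required.

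Finally, the dimension bound follows from a direct count: the contribution of each triple $(f,i,j)$ is at most $[V^{<}(i)][V^{>}(j)]$, and since $i+j\le 2^{n-1}+2^{n-2}$, Lemma~$5.4$ yields
\[[V^{<}(i)][V^{>}(j)]\le 2^{2n}\Bigl(\prod_{k'<n,\,k'\in Y}2^{2^{e(k')+1}}\Bigr)^{2}[V(2^{n-1})]^{2}/2^{2^{e(n)-1}}.\]
Multiplying by the at most $2^{n}$ splits available for each $f$ and by $r_{n}$, and invoking the hypothesis $r_{n}\le 2^{t(n)}$ with $t(n)=2^{e(n)-1}-3n-4-\sum_{k'\in Y,\,k'<n}2^{e(k')+2}$, I obtain $\dim F'(2^{n})\le\tfrac{1}{2}[V(2^{n-1})]^{2}$. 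The main obstacle is purely bookkeeping: one must be careful that the ``middle'' representation $f\equiv\sum v_{l}b_{l,l'}w_{l'}$ is genuinely well-defined via the tensor-product structure of the free algebra, and that the two small factors $2^{n}$ (from the number of splits) and $2$ (from the $\tfrac{1}{2}$ on the right) are exactly absorbed by the slack $-3n-4$ present in $t(n)$.
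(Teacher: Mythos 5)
Your proposal is correct and follows essentially the same route as the paper: decompose $A(k)$ as $\bigl(U^<(i)A(k-i)+A(k-j)U^>(j)\bigr)\oplus V^<(i)A(2^n)V^>(j)$, extract the middle factors of $f$ in $A(2^n)$, take their span as $F'(2^n)$, and bound its dimension by the product estimate from Section~5 together with the hypothesis $r_n\le 2^{t(n)}$. The small differences (deriving directness of the sum from the tensor-product structure rather than a dimension count, explicitly enlarging $F'(2^n)$ if the span were zero, and a couple of off-by-one reference labels) are cosmetic.
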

 \begin{proof} By Lemma $1.3$, we have $U^{<}(i)\oplus
  V^{<}(i)=A(i)$ and $U^{>}(j)\oplus V^{>}(j)=A(j)$. Therefore,
  $A(i)A(2^n)A(j)= (U^{<}(i)\oplus V^{<}(i))A(2^n)(U^{>}(j)\oplus
  V^{>}(j))$.  Consequently, $A(i+2^n+j)=T'+T$, where $$T=
  U^<(i)A(k-i)+A(k-j)U^>(j), T'=V^{<}(i)A(2^n)V^{>}(j).$$ Hence, $dim A((i+2^n+j))=\dim T+\dim T'-\dim T\cap T'$.
   Observe that $T\cap T'=0$, since
  $\dim A(i+2^n+j)\geq \dim T+ \dim T'$, because $T=U^{<}(i)A(2^n)U^{>}(j)+V^{<}(i)A(2^n)(U^{>}(j)+U^{<}(i))A(2^n)V^{>}(j)$, so $\dim T\leq \dim A(i+2^n+j)-\dim V^{<}(i)A(2^n)V^{>}(j))=\dim A(i+2^n+j)-\dim T'$.
  It follows that $A(i+2^n+j)=T'\oplus T$.

  Consider $f\in A(k)\cap \{f_{1}, \ldots ,f_{\xi}\}$ with $2^n+2^{n-1}\leq k\leq 2^n+2^{n-1}+2^{n-2}$.  We can write $f$ in the form $f={\tilde f}+g$,
  with $g\in T$ and ${\tilde f}\in T'$, where
  \[\tilde f=\sum_{c\in V^<(i),d\in V^>(j)}cz_{c,d,f}d,\qquad z_{c,d,f}\in A(2^n).\]

  Also for the given $f$, we restrict the $c,d$ above to belong to a
  basis, and let $T(i,j,f)\subseteq A(2^n)$ be the subspace spanned by all
  the $z_{c,d,f}$ above. We then have $\dim T(i,j,f)\leq\dim
  V^<(i)\dim V^>(j)$. Observe also $f\in
  A(i)T(i,j,f)A(j)+U^<(i)A(k-i)+A(k-j)U^>(j)$. Define
  \[F'(2^n)=\sum_{k=2^n+2^{n-1}}^{2^n+2^{n-1}+2^{n-2}}\sum_{f\in A(k)\cap \{f_{1}, \ldots ,f_{\xi }\}}\quad \sum_{i+j=k-2^n}T(i,j,f).\]
  We have $2^{n-1}\leq i+j \leq 2^{n-1}+2^{n-2}$, so by Lemma $5.3$
   we have $\dim T(i,j,f)\leq [V^<(\alpha)][V^>(\beta)]\leq
 2^{2n}(\prod_{k<n, k\in Y}2^{2^{e(k)+1}})^{2}[V(2^{n-1})]^{2}/2^{2^{e(n)-1}}.$
 Hence,
 $$\dim F'(2^{n})\leq r_{n}2^{3n}[V(2^{n-1})]^{2}\prod_{k<n, k\in
Y}2^{2^{e(k)+2}}/2^{2^{e(n)-1}}.$$
   To show that
   \[\dim F'(2^n)\leq {1\over 2}\dim V(2^{n-1})^2\]
   it suffices to show that
   $r_{n}2^{3n}\prod_{k<n, k\in
Y}2^{2^{e(k)+2}}\leq {1\over 2}2^{2^{e(n)-1}}$, which follows from assumption $r_{n}\leq 2^{t(n)}$ with $t(n)=2^{e(i)-1}-3n-4-4\sum_{k\in Y, k<n}2^{e(k)}$ from the beginning of this section.
\end{proof}
 In this section we will use the following lemma from \cite{sb}.

\begin{lemma}[Lemma 3.3, \cite{sb}]\label{lem:s1} Let $K$ be an algebraically closed field,
 $n$ be a natural number, and let $T\subseteq A(2^n)$ and
  $Q\subseteq A(2^{n+1})$ be $K$-linear spaces such that $\dim
  T+4\dim Q\leq \dim A(2^n)-2$.
  Then there exists a $K$-linear space $F\subseteq A(2^n)$ of
  dimension at most $\dim A(2^n)-2$ such that $T\subseteq F$ and
  $Q\subseteq FA(2^n)+A(2^n)F$.
\end{lemma}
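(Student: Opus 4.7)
The plan is to reformulate the requirement on $F$ in dual terms and then produce the required data by an affine dimension count.

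First, identify $A(2^{n+1})$ with $A(2^{n})\otimes A(2^{n})$ via concatenation (a vector-space isomorphism because every monomial of length $2^{n+1}$ splits uniquely into two monomials of length $2^{n}$). Under this identification, $FA(2^{n})+A(2^{n})F$ corresponds to $F\otimes A(2^{n})+A(2^{n})\otimes F$, and the quotient is $\bigl(A(2^{n})/F\bigr)\otimes\bigl(A(2^{n})/F\bigr)$. Hence demanding $Q\subseteq FA(2^{n})+A(2^{n})F$ is the same as demanding that the image of $Q$ in this tensor square vanishes. I would look for $F$ of the special shape $F=\ker\phi_{1}\cap\ker\phi_{2}$ with linearly independent $\phi_{1},\phi_{2}\in W:=T^{\perp}\subseteq A(2^{n})^{*}$; such an $F$ automatically contains $T$ and has codimension exactly $2$, and the vanishing condition rewrites as
\[
q(\phi_{i},\phi_{j})=0\quad \text{for every }q\in Q\text{ and every }i,j\in\{1,2\},
\]
where each $q\in A(2^{n})\otimes A(2^{n})$ is viewed as a bilinear form on $A(2^{n})^{*}$.

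Second, I would produce such a pair by a dimension count. Fix a basis $q_{1},\dots,q_{\dim Q}$ of $Q$ and let
\[
\widetilde{X}=\{(\phi_{1},\phi_{2})\in W\times W : q_{k}(\phi_{i},\phi_{j})=0\text{ for all }k,i,j\}.
\]
By Krull's height theorem every irreducible component of $\widetilde{X}$ has dimension at least $2\dim W-4\dim Q$. The hypothesis $\dim T+4\dim Q\leq \dim A(2^{n})-2$ gives $\dim W\geq 4\dim Q+2$, so every component of $\widetilde{X}$ has dimension at least $\dim W+2$. On the other hand, the locus $\widetilde{B}\subseteq W\times W$ of linearly dependent pairs is the classical determinantal variety of $2\times \dim W$ matrices of rank at most one, whose dimension is $\dim W+1$. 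Since every component of $\widetilde{X}$ strictly exceeds $\dim\widetilde{B}$, no component lies inside $\widetilde{B}$, and because $K$ is algebraically closed one obtains an actual $K$-point $(\phi_{1},\phi_{2})\in\widetilde{X}\setminus\widetilde{B}$. Setting $F=\ker\phi_{1}\cap\ker\phi_{2}$ for such a pair satisfies all the requirements of the lemma.

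The most delicate point is arranging this dimension inequality to beat the bad locus: one must apply Krull componentwise (global nonemptiness of $\widetilde{X}$ is trivial because $(0,0)\in\widetilde{X}$, but that by itself gives nothing), and one must use the sharp dimension $\dim\widetilde{B}=\dim W+1$ rather than the naive $2\dim W$. The hypothesis $\dim T+4\dim Q\leq\dim A(2^{n})-2$ is then exactly what is needed to exceed $\dim\widetilde{B}$ and force a usable $K$-rational pair outside it; algebraic closedness of $K$ enters essentially at this last step, when turning a positive-dimensional component into an actual $K$-point.
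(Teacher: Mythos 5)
Your proposal is correct and is essentially the paper's own argument, just written in dual coordinates and with the key dimension count made explicit: the paper parametrises the choice of $F$ by a linear map $\overline\Phi\colon A(2^n)\to KY+KZ$ killing $T$, whose two component functionals are precisely your $(\phi_1,\phi_2)\in W\times W$, and then invokes Hilbert's Nullstellensatz for the existence of a good assignment — which is exactly the Krull-bound-versus-determinantal-locus comparison you carry out. Your version is self-contained where the paper only sketches.
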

\begin{proof} A sketch of a proof is included following \cite{sb}.
  Choose a $K$-linear complement $C\subseteq A(2^n)$ to $T$; we have
  \begin{equation}\label{eq:Csum}
    C\oplus T=A(2^n).
  \end{equation}
  Let $\{c_1,\dots,c_s\}$ be a basis of $C$ with $s=\dim
  A(2^n)-\dim T$.

   Let $X, Y$ be two indeterminates.  Let $\eta_t\in K$ and $\zeta_t\in K$, for all
  $t=1,\dots,s$.
   Define a $K$-linear mapping $\overline\Phi\colon C \rightarrow K Y+K Z$
  by $\overline\Phi(c_t)=\eta_tY+\zeta_tZ$ for $t=1,\dots,s$. Using
  $C\oplus T=A(2^n)$, extend it to a mapping $\overline\Phi\colon
  A(2^n)\rightarrow K Y+K Z$ by the condition $T\subseteq\ker
  \overline\Phi$. Using
Hilbert's Nullstellensatz we show that there are assignments
   $\eta_t\in K$ and $\zeta_t\in K$, for all
  $t=1,\dots,s$, such that the following hold.
  \begin{itemize}
  \item [a.] There are $u,v$ such that  $\overline\Phi(c_u)=\eta_uY+\zeta_uZ$ and
  $\overline\Phi(c_v)=\eta_vY+\zeta_vZ$ give two elements that are
  linearly independent over $K$.
  \item[b.] $Q\subseteq A(2^n)\ker(\overline\Phi)+\ker(\overline\Phi)A(2^n)$.
  \end{itemize}
  We define $F:=\ker\overline\Phi.$ Hence $Q\subseteq FA(2^{n})+A(2^{n})F$, as required.
   By construction, we have $T\subseteq\ker\overline\Phi$ so
  $T\subseteq F$ as required. Because
  $\overline\Phi(c_u):=\eta_uY+\zeta_uZ$ and
  $\overline\Phi(c_v):=\eta_vY+\zeta_vZ$ are $K$-linearly
  independent, we have $\dim F\leq \dim A(2^n)-2$ as
  required.
\end{proof}
 The following lemma is a generalisation of Lemma $3.4$ from \cite{sb}.
\begin{lemma}\label{lem:s2}
  Suppose that sets $U(2^m), V(2^m)$ were already constructed for all
  $m<n$, and satisfy the conditions of Theorem~\ref{8props}. Let
  $F=\{f_{1}, f_{2}, \ldots ,f_{\xi}\}$ be as in Theorem $0.1$.
    Define a $K$-linear
  subspace $Q\subseteq A(2^{n+1})$ as follows:
  \[Q=\sum_{f\in F:2^{n}+2^{n-1}\leq \deg f\leq 2^{n}+2^{n-1}+2^{n-2}}\quad
  \sum_{i+j=2^{n+1}-\deg f}V^{>}(i)fV^{<}(j).\]
  Then $\dim Q\leq \frac14(\frac12\dim V(2^{n-1})^2-2)$.
\end{lemma}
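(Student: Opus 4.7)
The plan is to bound $\dim Q$ directly by controlling the outer sum, the inner sum, and each individual summand separately, then invoking the quantitative hypothesis $r_n\leq 2^{t(n)}$ from the beginning of Section $6$. If $n\notin Y$, then by Theorem $0.1$ there are no relations of degree in $(2^n,2^{n+1}]$ and so $Q=0$; I therefore assume throughout that $n\in Y$.

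First I will count the outer sum: the relations of degree in $[2^n+2^{n-1},\,2^n+2^{n-1}+2^{n-2}]$ form a subset of those of degree in $(2^n,2^{n+1}]$, so there are at most $r_n$ of them. Next I will count the inner sum: for each $f$ of degree $k$ in that range, $2^{n+1}-k$ ranges over $[2^{n-2},2^{n-1}]$, yielding at most $2^{n-1}+1\leq 2^n$ pairs $(i,j)$. For each such summand $V^>(i)fV^<(j)$, multiplication by the single element $f$ is $K$-linear, so $\dim V^>(i)fV^<(j)\leq \dim V^>(i)\dim V^<(j)$. Since $i+j\leq 2^{n-1}\leq 2^{n-1}+2^{n-2}$, and using the equality $\dim V^<(\alpha)=\dim V^>(\alpha)$ recorded in the proof of Lemma $5.3$, Lemma $5.3$ then bounds this product by
\[\dim V^>(i)\dim V^<(j)\;\leq\;2^{2n}\Bigl(\prod_{k<n,\,k\in Y}2^{2^{e(k)+1}}\Bigr)^{2}[V(2^{n-1})]^{2}\big/2^{2^{e(n)-1}}.\]

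Multiplying the three bounds yields $\dim Q\leq r_n\cdot 2^{3n}\cdot 2^{\sum_{k<n,k\in Y}2^{e(k)+2}}\cdot [V(2^{n-1})]^{2}/2^{2^{e(n)-1}}$. Substituting the standing hypothesis $r_n\leq 2^{t(n)}$ with $t(n)=2^{e(n)-1}-3n-4-\sum_{k<n,k\in Y}2^{e(k)+2}$, the exponents telescope to $-4$, giving $\dim Q\leq \tfrac{1}{16}[V(2^{n-1})]^{2}$. Because $n\in Y$, property~$2$ of Theorem~\ref{8props} yields $\dim V(2^{n-1})=2^{2^{e(n)}}\geq 4$ (recall $e(n)\geq 1$), so $[V(2^{n-1})]^{2}\geq 16$, and the elementary inequality $\tfrac{1}{16}X\leq \tfrac{1}{4}(\tfrac{1}{2}X-2)$, which holds for $X\geq 8$, completes the proof.

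The one place to be careful is the bookkeeping of the powers of $2$: the numerical constant $-4$ in the definition of $t(n)$ is tuned precisely so that after telescoping the exponents coming from Lemma $5.3$ and the counting steps above, one lands at $2^{-4}$, which is just small enough to absorb both the factor $\tfrac14$ and the subtracted constant $2$ in the target bound. Beyond this arithmetic verification, nothing beyond Lemma $5.3$ and the hypotheses on $r_n$ and $e(n)$ is required.
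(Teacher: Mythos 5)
Your proposal is correct and follows essentially the same route as the paper's proof: bound the inner summand via Lemma~5.3 (plus $[V^<(\alpha)]=[V^>(\alpha)]$), multiply by at most $2^n$ choices of $(i,j)$ and $r_n$ choices of $f$, apply $r_n\le 2^{t(n)}$ to get $\dim Q\le \tfrac1{16}\dim V(2^{n-1})^2$, and finish with the observation $\dim V(2^{n-1})^2\ge 16$. The only (harmless) addition is your explicit dismissal of the case $n\notin Y$.
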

\begin{proof}
  By Lemma $5.3$, the inner sum has dimension at most
 \[ 2^{2n}(\prod_{k<n, k\in Y}2^{2^{e(k)+1}})^{2}[V(2^{n-1})]^{2}/2^{2^{e(n)-1}}.\]
  Summing over all $i+j=2^{n+1}-\deg f$
  multiplies by a factor of at most $2^{n}$ (because $2^{n+1}-\deg f\leq 2^{n-1}$); summing over all $f\in
  \{f_{1}, \ldots ,f_{\xi }\}$ with degrees between $2^{n}+2^{n-1}$ and
   $2^{n}+2^{n-1}+2^{n-2}$ multiplies by $r_{n}$.
   Therefore, \[\dim Q \leq r_{n}2^{3n}(\prod_{k<n, k\in
Y}2^{2^{e(k)+1}})^{2}[V(2^{n-1})]^{2}/2^{2^{e(n)-1}}.\]
    By assumption on $r_{n}$ from the beginning of this section, we get
    $\dim Q  \leq \frac1{16}\dim V(2^{n-1})^2$.

  Observe now that $\frac14\dim V(2^{n-1})^2\leq \frac12\dim V(2^{n-1})^2-2$,
  because $V(2^{n-1})^2=(2^{2^{e(n)}})^2\geq 2^{2^2}\geq 16$.  We get
  $\dim Q \leq\frac14(\frac12\dim V(2^{n-1})^2-2)$ as required.
\end{proof}

We are now ready to construct the space $F(2^n)$. Assume
$U(2^m),V(2^m)$ were already constructed for all $m<n$, and satisfy
the conditions of Theorem~\ref{8props}, and suppose that $n\in Y$.
\begin{proposition}[Proposition 3.5, \cite{sb}]\label{prop:F}
Let $K$ be an algebraically closed field.
  With notation as in Lemma $6.1$, there is a linear $K$-space
  $F(2^n)\subseteq A(2^n)$ satisfying $\dim
  F(2^n)\le \dim V(2^{n-1})^2-2$ and \[F'(2^{n})\subseteq
  F(2^{n})+U(2^{n-1})A(2^{n-1})+A(2^{n-1})U(2^{n-1}).\] Moreover, for all $f\in \{f_{1}, \ldots , f_{\xi }\}$ with
  $\deg f\in\{2^n+2^{n-1},\dots,2^n+2^{n-1}+2^{n-2}\}$ we have
  \begin{multline*}
    AfA\cap A(2^{n+1})\subseteq A(2^n)F(2^n)+F(2^n)A(2^n)\\
    +A(2^{n-1})U(2^{n-1})A(2^n)+A(2^n)U(2^{n-1})A(2^{n-1})\\
    +U(2^{n-1})A(2^n+2^{n-1})+A(2^n+2^{n-1})U(2^{n-1}).\\
  \end{multline*}
\end{proposition}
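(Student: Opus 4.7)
My plan is to construct $F(2^n)$ as the kernel of a cleverly chosen linear map on a subspace of $A(2^n)$, using the Hilbert Nullstellensatz tool of Lemma \ref{lem:s1}. The two ingredients are already prepared: Lemma 6.1 supplies $F'(2^n)$ absorbing each relation $f \in \{f_1,\dots,f_\xi\}$ of degree in $[2^n+2^{n-1},2^n+2^{n-1}+2^{n-2}]$ in the form $f\in A(i)F'(2^n)A(j)+U^<(i)A(k-i)+A(k-j)U^>(j)$, while Lemma 6.2 furnishes the subspace $Q\subseteq A(2^{n+1})$ gathering the "doubly flanked" products $V^>(i)fV^<(j)$. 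The dimension estimates from those lemmas give $\dim F'(2^n)\leq \tfrac12\dim V(2^{n-1})^2$ and $\dim Q\leq \tfrac14(\tfrac12\dim V(2^{n-1})^2-2)$, which add up to exactly $\dim V(2^{n-1})^2-2$, the target bound.

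To apply Lemma \ref{lem:s1} with the sharp dimension bound $\dim V(2^{n-1})^2-2$ (rather than the crude $\dim A(2^n)-2$), I would repeat its Nullstellensatz construction inside a complement $W$ of $A(2^{n-1})U(2^{n-1})+U(2^{n-1})A(2^{n-1})$ in $A(2^n)$ chosen inside $V(2^{n-1})V(2^{n-1})$. The decomposition $A(2^n)=V(2^{n-1})V(2^{n-1})+A(2^{n-1})U(2^{n-1})+U(2^{n-1})A(2^{n-1})$, which follows from properties 5--7 of Theorem 4.1, guarantees $\dim W\leq \dim V(2^{n-1})^2$. Setting $T$ to be the projection of $F'(2^n)$ onto $W$ and verifying $\dim T+4\dim Q\leq \dim W-2$, I run the same map $\overline\Phi\colon W\to KY+KZ$, chosen so that (a) two basis vectors of a complement of $T$ in $W$ have $K$-linearly independent images, and (b) $Q\subseteq W\ker(\overline\Phi)+\ker(\overline\Phi)W+A(2^{n-1})U(2^{n-1})A(2^n)+A(2^n)U(2^{n-1})A(2^{n-1})$, using the Nullstellensatz exactly as in Lemma \ref{lem:s1}. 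Define $F(2^n):=\ker\overline\Phi\subseteq W$; its dimension is at most $\dim V(2^{n-1})^2-2$, and $F'(2^n)\subseteq F(2^n)+A(2^{n-1})U(2^{n-1})+U(2^{n-1})A(2^{n-1})$ because $T\subseteq F(2^n)$.

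To deduce the multiplicative inclusion for $AfA\cap A(2^{n+1})$, I would fix $f$ with $\deg f\in[2^n+2^{n-1},2^n+2^{n-1}+2^{n-2}]$ and $i,j\geq 0$ with $i+j=2^{n+1}-\deg f\leq 2^{n-1}$. Using Lemma 1.3, decompose $A(i)=U^>(i)\oplus V^>(i)$ and $A(j)=V^<(j)\oplus U^<(j)$, so that $A(i)fA(j)\subseteq V^>(i)fV^<(j)+U^>(i)A(2^{n+1}-i)+A(2^{n+1}-j)U^<(j)$. The first term lies in $Q$, which by construction sits inside $A(2^n)F(2^n)+F(2^n)A(2^n)+A(2^{n-1})U(2^{n-1})A(2^n)+A(2^n)U(2^{n-1})A(2^{n-1})$. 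The other two terms are absorbed into $U(2^{n-1})A(2^n+2^{n-1})+A(2^n+2^{n-1})U(2^{n-1})$ after routine massaging with Lemma 1.3 (which collapses $U^>$ and $U^<$ into $U(2^{n-1})$ terms, since $i,j\leq 2^{n-1}$).

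The main obstacle is the sharp dimension bound: Lemma \ref{lem:s1} as literally stated yields only $\dim F\leq \dim A(2^n)-2$, so to obtain $\dim F(2^n)\leq \dim V(2^{n-1})^2-2$ I must either reprove that lemma inside the smaller ambient space $W$ (which is possible because its proof uses only linear-algebraic structure plus the Nullstellensatz, not any property specific to $A(2^n)$), or phrase the Nullstellensatz step so that $W$ can replace $A(2^n)$ verbatim. The bookkeeping in the final step — matching the six summands in the target inclusion with the three ambient decompositions of $A(i)fA(j)$ — is mechanical once the correct $F(2^n)$ is in hand.
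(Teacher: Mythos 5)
You correctly identify the ingredients (Lemma 6.1's $F'(2^n)$, Lemma 6.2's $Q$, and the Nullstellensatz Lemma~\ref{lem:s1}), and you correctly diagnose the apparent obstacle: Lemma~\ref{lem:s1} as stated only delivers $\dim F \leq \dim A(2^n)-2$, not the sharper bound $\dim V(2^{n-1})^2-2$. But your proposed fix — shrinking the ambient space to a complement $W$ of $U(2^{n-1})A(2^{n-1})+A(2^{n-1})U(2^{n-1})$ inside $V(2^{n-1})^2$ and re-running the Nullstellensatz argument there, with condition (b) weakened to allow the $U$-terms to absorb part of $Q$ — is never actually carried out. You acknowledge this yourself ("the main obstacle..."); that re-derivation, including checking that the modified condition (b) is still expressible as a closed algebraic condition on $(\eta_t,\zeta_t)$ with the same dimension count that makes the Nullstellensatz applicable, is precisely the nontrivial content you skip. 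As written, the argument has a genuine gap at its core step.

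The paper avoids the whole issue with a cleaner trick that you missed: apply Lemma~\ref{lem:s1} \emph{verbatim} in the full ambient $A(2^n)$, but take $T := F'(2^n) + U(2^{n-1})A(2^{n-1}) + A(2^{n-1})U(2^{n-1})$, i.e.\ include the large ``known'' $U$-subspace in $T$. The dimension estimate $\dim T \leq \dim A(2^n)-\tfrac12\dim V(2^{n-1})^2$ together with $4\dim Q \leq \tfrac12\dim V(2^{n-1})^2-2$ still gives $\dim T+4\dim Q \leq \dim A(2^n)-2$, so Lemma~\ref{lem:s1} produces $F\supseteq T$ with $\dim F\leq\dim A(2^n)-2$ and $Q\subseteq FA(2^n)+A(2^n)F$. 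Since $T\subseteq F$, the subspace $U(2^{n-1})A(2^{n-1})+A(2^{n-1})U(2^{n-1})$ sits inside $F$, and one takes $F(2^n)$ to be any linear complement of it in $F$. Its dimension is then $\dim F - (\dim A(2^n)-\dim V(2^{n-1})^2) \leq \dim V(2^{n-1})^2-2$, and the inclusion $F'(2^n)\subseteq F = F(2^n)+U(2^{n-1})A(2^{n-1})+A(2^{n-1})U(2^{n-1})$ is immediate. In short: rather than shrinking the ambient space, enlarge $T$ and peel the $U$-part back off at the end. This uses Lemma~\ref{lem:s1} as a black box and requires no re-proof. Your handling of the final multiplicative inclusion (decomposing $A(i)fA(j)$ via Lemma 1.3 and absorbing the $U^>$, $U^<$ terms into $U(2^{n-1})A(2^n+2^{n-1})$ and its mirror) is essentially the same as the paper's and is fine.
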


\begin{proof} We outline the proof from \cite{sb}.
  Consider the space $Q\subseteq A(2^{n+1})$ defined in
  Lemma~\ref{lem:s2}, and the space
  $T:=F'(2^n)+U(2^{n-1})A(2^{n-1})+A(2^{n-1})U(2^{n-1})\subseteq
  A(2^n)$, with $F'(2^n)$ as in Lemma $6.1$. Observe that $4\dim
  Q\le\frac12\dim V(2^{n-1})^2-2$ by Lemma~\ref{lem:s2}, hence 
    $\dim T \leq \dim F'(2^n)+(\dim A(2^n)-\dim V(2^{n-1})^2)\leq \dim A(2^n)-\tfrac12\dim V(2^{n-1})^2.$
  Therefore, $\dim T+4\dim Q\leq \dim A(2^n)-2$ and we may apply
  Lemma~\ref{lem:s1} to obtain a set $F$.

 Let $i,j,k\in N$ with $i+j+k=2^{n+1}$, and consider $f\in \{f_{1}, \ldots , f_{\xi }\}$ with $\deg f=k$. By Lemma $1.3$,
    $A(i)fA(j) = (U^>(i)+V^>(i))f(U^<(j)+V^<(j))
    \subseteq V^>(i)fV^<(j) + U^>(i)A(2^{n+1}-i)+A(2^{n+1}-j)U^<(j)$.
     Hence,
   $$ A(i)fA(j) \cap A(2^{n+1})\subseteq Q + U^>(i)A(2^{n+1}-i)+ A(2^{n+1}-j)U^<(j).$$
  By assumption on $k$, we have $i+j\leq 2^{n-1}$, so
  Lemma $1.3$ yields
  $U^>(i)A(2^{n+1}-i)=(U^>(i)A(2^{n-1}-i))A(2^n+2^{n-1})\subseteq
  U(2^{n-1})A(2^n+2^{n-1})$, and similarly
  $A(2^{n+1}-j)U^<(j)\subseteq A(2^n+2^{n-1})U(2^{n-1})$.

  Then, Lemma~\ref{lem:s1} yields $Q\subseteq
  A(2^n)F+FA(2^n)$.
   Consequently, 
  $$ A(i)fA(j) \cap A(2^{n+1})\subseteq   A(2^n)F+FA(2^n) + U(2^{n-1})A(2^n+2^{n-1})+A(2^n+2^{n-1})U(2^{n-1}).$$

  Recall $U(2^{n-1})A(2^{n-1})+A(2^{n-1})U(2^{n-1})\subseteq
  T\subseteq F$.  Let $F(2^{n})\subseteq F$ be a linear $K$-space
  satisfying $F(2^{n})\oplus
  (U(2^{n-1})A(2^{n-1})+A(2^{n-1})U(2^{n-1}))=F$. The last
  claim of the theorem holds when we substitute this equation
   into the above equality.

   Observe that $\dim F(2^{n})= \dim F-\dim
  U(2^{n-1})A(2^{n-1})+A(2^{n-1})U(2^{n-1})$, so $\dim F(2^{n})\leq
  \dim A(2^n)-2-(\dim A(2^n)-\dim V(2^{n-1})^2)\leq \dim V(2^{n-1})^2
  -2$, so the first claim of our theorem holds.  Since
  $F'(2^{n})\subseteq F=F(2^{n})+U(2^{n-1})A(2^{n-1})+A(2^{n-1})U(2^{n-1})$, the proof is finished.
\end{proof}

\section{Free subalgebras and Noetherian images}

 \begin{theorem} Suppose that the assumptions of Theorem $0.1$ hold, and that we use the same notation as in Theorem $0.1$. Assume that for each $n\in Y$ there is a natural number $1\leq e(n)\leq n-1$ such that, for all $n\in Y$, sets $S_{n}=\{n-1-e(n), n-1\}$ are disjoint and  $$r_{n}2^{3n+4}\prod_{k<n, k\in Y}2^{2^{e(k)+2}}\leq 2^{2^{e(n)-1}}.$$
    Then $A/I$ contains a free noncommutative graded subalgebra in two generators, and these generators are monomials of the same degree. In particular, $A/I$ is not Jacobson radical.
    Moreover, $A/I$ can be homomorphically mapped onto a prime, infinite dimensional, Noetherian, graded algebra with linear growth.
 \end{theorem}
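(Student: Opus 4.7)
The plan is to carry out the inductive $U(2^{n}), V(2^{n}), F(2^{n})$ construction of Sections $4$ and $6$, show that the prescribed relations $f_{1},\dots,f_{\xi}$ all lie in the resulting ideal $E$, and then deduce the two stated conclusions from Theorems $1.2$ and $2.5$. The gap hypothesis of Theorem $0.1$ ensures that every $f_{i}$ of degree in $(2^{n},2^{n+1}]$ actually has degree in the narrower band $\{2^{n}+2^{n-1},\dots,2^{n}+2^{n-1}+2^{n-2}\}$ on which Proposition $6.3$ operates, so no intermediate degrees require extra treatment; since $Y=\{n:r_{n}\neq 0\}$ is finite (there are only $\xi$ relations), $t:=\sup Y$ is well-defined.

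I would run two parallel inductions on $n$, both with the sequence $\{e(n)\}_{n\in Y}$ supplied by the hypothesis. At stage $n$, given $U(2^{m}), V(2^{m}), F(2^{m})$ for $m<n$ satisfying properties 1--8 of Theorem $4.1$, Proposition $6.3$ produces $F(2^{n})\subseteq A(2^{n})$ with $\dim F(2^{n})\leq \dim V(2^{n-1})^{2}-2$; the numerical input making this work is precisely the inequality $r_{n}2^{3n+4}\prod_{k<n,k\in Y}2^{2^{e(k)+2}}\leq 2^{2^{e(n)-1}}$ of the present theorem. Then Theorem $4.1$ (first construction) or Theorem $4.2$ (second construction) extends $U,V$ to index $2^{n}$, preserving properties 1--8, respectively 1--7 together with 8'. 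The two inductions agree for $n\leq t$ and diverge past $t$: the first forces $\dim V(2^{m})=1$ for every $m>t$, while the second enforces $V(2^{m+1})=V(2^{m})V(2^{m})$ for every $m\geq t$.

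For each $f_{i}$ with $\deg f_{i}=k\in\{2^{n}+2^{n-1},\dots,2^{n}+2^{n-1}+2^{n-2}\}$, Lemma $6.1$ together with Proposition $6.3$ yields both hypotheses of Theorem $3.1$: using the inclusions $F(2^{n})\subseteq U(2^{n})$ and $A(2^{n-1})U(2^{n-1})+U(2^{n-1})A(2^{n-1})\subseteq U(2^{n})$, one obtains $AfA\cap A(2^{n+1})\subseteq A(2^{n})U(2^{n})+U(2^{n})A(2^{n})$ and $f\in A(i)U(2^{n})A(j)+U^{<}(i)A(k-i)+A(k-j)U^{>}(j)$ for all $i+j=k-2^{n}$. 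Theorem $3.1$ then places $f_{i}\in E$ in both constructions, so $I\subseteq E$.

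The free subalgebra claim follows by applying Theorem $1.2$ to the second construction: the condition $t\notin S$ forces $\dim V(2^{t})=2$, spanned by two monomials $m_{1},m_{2}$, and property 8' propagates the squaring past $t$, so $A/E$ contains a free noncommutative subalgebra generated by the images of $m_{1},m_{2}$; since $I\subseteq E$, these images remain free in $A/I$, and $A/I$ cannot be Jacobson radical by a standard argument (in a graded Jacobson radical algebra a homogeneous positive-degree, non-nilpotent element would force its quasi-inverse to have infinitely many nonzero homogeneous components, contradicting finiteness). The Noetherian image claim follows by applying Theorem $2.5$ to the first construction (which satisfies the Section $2$ standing assumption $\dim V(2^{m})=1$ for $m>t$): Theorem $2.5$ yields a graded, prime, Noetherian, linear-growth quotient of $A/E$, which is in turn a quotient of $A/I$. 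The principal obstacle is checking that Lemma $6.1$ and Proposition $6.3$ together provide exactly the two containments demanded by Theorem $3.1$; everything else is assembly from earlier results.
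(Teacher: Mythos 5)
Your proposal is correct and matches the paper's proof in all essential respects: run the inductive construction of Sections 4 and 6 twice (once satisfying property 8 via Theorem 4.1, once satisfying property 8' via Theorem 4.2), use Lemma 6.1 together with the $F(2^n)$ proposition and Theorem 3.1 to show $I\subseteq E$, then invoke Theorem 2.5 for the Noetherian linear-growth image and Theorem 1.2 for the free subalgebra. The small additions you make --- noting that the gap hypothesis localizes every relation into a band $\{2^n+2^{n-1},\dots,2^n+2^{n-1}+2^{n-2}\}$, that $Y$ is finite so $\sup Y$ exists, and a short argument that a graded algebra containing a free noncommutative subalgebra cannot be Jacobson radical --- are all consistent with, and make explicit, what the paper leaves implicit.
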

\begin{proof} We will first show that $A/I$ contains a free noncommutative subalgebra.   We will construct sets $U(2^{n})$, $V(2^{n})$, $F(2^{n})$ satisfying properties
 $1-7$ and $8'$ from Theorem $4.2$ applied to $e(n)$ as in the assumptions of
   our theorem. The union in~\eqref{def:S} is disjoint by the assumptions. We may therefore start the induction with $U(2^0)=F(2^0)=0$ and
  $V(2^0)=K x+K y$. Then, assuming that we constructed
  $U(2^m),V(2^m)$ for all $m<n$, if $n\in Y$ we construct $F(2^n)$ using
  Proposition $6.1$ and if $n\notin Y$ we set $F(2^{n})=0$. We then construct $U(2^n)$, $V(2^n)$
  using Theorem $4.2$. Let $E$ be defined as in Section $1$.
  By Lemma $1.1$, the set E is an ideal in A and  $A/E$ is an infinite dimensional algebra.
 By Theorem $1.2$, the algebra $A/E$ contains a free noncommutative subalgebra in two generators, and these two generators are monomials of the same degree.
   We will now show that $A/E$ is a homomorphic image of $A/I$.
   We need to show  that $I\subseteq E$, that is that elements $f_{1}, f_{2}, \ldots , f_{\xi } \in E$. Let $f\in A(k)$ be one of these elements for some $2^{n}+2^{n-1}\leq k\leq 2^{n}+2^{n-1}+2^{n-2}$. By Lemma $6.1$ and Proposition $6.1$, we get that  $F'(2^{n})\subseteq F(2^{n})+U(2^{n-1})A(2^{n-1})+A(2^{n-1})U(2^{n-1})\subseteq U(2^{n})$. Consequently, again by Lemma $6.1$ and Proposition $6.1$, we get that $f$ satisfies  the assumptions of Lemma $3.1$. Therefore, and by thesis of Lemma $3.1$, we have $f\in E$, as required.

  We will now prove that $A/ I$ can be mapped onto a prime, graded and infinite dimensional algebra which satisfies a polynomial identity.
  We will construct sets $U(2^{n})$, $V(2^{n})$, $F(2^{n})$ satisfying properties $1-8$ from Theorem $4.1$,
 applied for $e(n)$ as in the assumptions of our theorem. We start the induction with $U(2^0)=F(2^0)=0$ and
  $V(2^0)=K x+K y$. Then, assuming that we constructed
  $U(2^m),V(2^m)$ for all $m<n$, if $n\in Y$ we construct $F(2^n)$ using
  Proposition $6.1$, and if $n\notin Y$ we set $F(2^{n})=0$. We then construct $U(2^n)$, $V(2^n)$
  using Theorem $4.1$ applied for $e(i)$ as in the assumptions. Let $E$ be defined as in Section $1$.
  By Lemma $1.1$, the set E is an ideal in A and  $A/E$ is an infinite dimensional algebra.
 By Theorem $2.5$, the algebra $A/E$  has a graded, prime, Noetherian, infinitely dimensional
 homomorphic image with linear growth; call this $R$. We will now show
  that $R$ is a homomorphic image of $A/I$. We need to show that $I\subseteq E$, that is that elements $f_{1}, f_{2}, \ldots , f_{\xi }\in E$ (these elements are as in  Theorem $0.1$). Let $f\in A(k)$ be one of these elements, for some  $2^{n}+2^{n-1}\leq k\leq 2^{n}+2^{n-1}+2^{n-2}$. By Lemma $6.1$ and Proposition $6.1$, we get that  $F'(2^{n})\subseteq F(2^{n})+U(2^{n-1})A(2^{n-1})+A(2^{n-1})U(2^{n-1})\subseteq U(2^{n})$.
   Consequently, and again by Lemma $6.1$ and Proposition $6.1$, we get that $f$ satisfies  the assumptions of Lemma $3.1$. Therefore, and by thesis of Lemma $3.1$, we have $f\in E$, as required.
  \end{proof}

\begin{lemma} Let $Y$ be a subset of the set of natural numbers and let $\{r_{i}\}_{i\in Y}$ be a sequence of natural numbers which
 satisfy assumptions of Theorem 0.1. Then there are natural numbers $e(i)$ for $i\in Y$ such that
 $1\leq e(n)\leq n-1$ and such that, for all $n\in Y$, sets
 $S_{n}=\{n-1-e(n), n-1\}$ are disjoint and $r_{n}2^{3n+4}\prod_{k<n, k\in
  Y}2^{2^{e(k)+2}}\leq 2^{2^{e(n)-1}}.$
\end{lemma}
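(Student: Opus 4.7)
The plan is to construct the numbers $e(n)$ by induction on $n \in Y$ in increasing order, at each step choosing $e(n)$ to be the smallest positive integer satisfying the target inequality; the two remaining conditions---the bounds $1 \leq e(n) \leq n-1$ and the disjointness of the $S_n$---are then verified along the way.

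First I would set $\ell_n := \log_2 r_n$ and
\[ X_n := \ell_n + 3n + 4 + \sum_{k<n,\,k\in Y} 2^{e(k)+2}, \]
so that the required inequality is equivalent to $2^{e(n)-1} \geq X_n$. The strategy is to carry through the induction the auxiliary invariant
\[ (\ast)\qquad 2^{e(k)+2} \leq 32\,\ell_k \quad\text{for all } k \in Y. \]
The hypothesis of Theorem $0.1$ rewrites as $\ell_n > 3n+4 + 32 \sum_{k<n,\,k\in Y}\ell_k$, so under $(\ast)$ one has $\sum_{k<n,\,k\in Y}2^{e(k)+2} \leq 32\sum \ell_k < \ell_n - 3n - 4$, giving $X_n < 2\ell_n$. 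Minimality of $e(n)$ would then yield $2^{e(n)-1} \leq 2X_n < 4\ell_n$, hence $2^{e(n)+2} < 32\ell_n$, re-establishing $(\ast)$ at level $n$ and thus closing the induction on the invariant.

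Next, letting $m^* := \max\bigl(\{0\}\cup(Y\cap[0,n-1])\bigr)$, I would use the other half of the hypothesis, namely $r_n < 2^{2^{n-m^*-3}}$, which gives $\ell_n < 2^{n-m^*-3}$; combined with $2^{e(n)-1} < 4\ell_n$ just obtained, this yields $2^{e(n)-1} < 2^{n-m^*-1}$, so $e(n) \leq n - m^* - 1$. This is $\leq n-1$ since $m^* \geq 0$, and the lower bound $e(n) \geq 1$ is automatic from the definition of $e(n)$ as the smallest positive integer. For disjointness of the sets $S_n = \{n-1-e(n),\dots,n-1\}$: for any $m \in Y$ with $m < n$ one has $m \leq m^*$, hence $\min S_n = n-1-e(n) \geq m^* \geq m > m-1 = \max S_m$, so $S_n \cap S_m = \emptyset$.

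The main obstacle will be the delicate accounting needed to close the invariant $(\ast)$: the factor $32$ appearing in the Golod--Shafarevich-type hypothesis $r_n > 2^{3n+4}\prod_{i<n,\,i\in Y} r_i^{32}$ is exactly what is needed to absorb the cumulative contribution $\sum 2^{e(k)+2}$ into $\ell_n$ while keeping enough slack (the term $\ell_n - 3n - 4$) for $(\ast)$ to propagate to the next step. A weaker growth assumption on the $r_n$'s would force either a larger constant in $(\ast)$, breaking the geometric cascade, or a looser choice of $e(n)$, pushing $e(n)$ above $n-m^*-1$ and destroying the disjointness of the $S_n$'s---so the two halves of the Theorem $0.1$ hypothesis have to be used simultaneously and at their full strength.
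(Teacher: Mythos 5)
Your proof is correct. The one implicit point worth making explicit is that $e(n)\ge 2$ (indeed $e(n)\ge 5$ since $X_n>\ell_n>3n+4\ge 10$), which is what licenses the "minimality" step $2^{e(n)-2}<X_n$; but this is immediate. You should also note that $n-m^*-1\ge 1$ is forced, since $r_n\ge 2$ together with $r_n<2^{2^{n-m^*-3}}$ gives $n-m^*-3\ge 1$.

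The paper takes a somewhat different route in form, though the same estimates drive both arguments. The paper defines $e(i)$ \emph{explicitly and non-inductively}, purely in terms of $r_i$, by the sandwich $2^{2^{e(i)-3}}\le r_i < 2^{2^{e(i)-2}}$, and then verifies the three conditions directly; the decisive observation is the identity $2^{2^{e(i)+2}}=\bigl(2^{2^{e(i)-3}}\bigr)^{32}\le r_i^{32}$, which converts $\prod_{k<n}2^{2^{e(k)+2}}$ into $\prod_{k<n}r_k^{32}$ and makes the Theorem~0.1 lower bound on $r_n$ apply verbatim. You instead define $e(n)$ \emph{inductively and minimally} subject to the target inequality (so $e(n)$ genuinely depends on the earlier $e(k)$'s through $X_n$), and you recover the same relation as the loop invariant $(\ast)\colon 2^{e(k)+2}\le 32\,\ell_k$, which is the logarithm of the paper's key identity. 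The paper's version is shorter and requires no invariant because the $e(i)$ are decoupled; your version is arguably more systematic (no magic formula to pull out of the air, just ``take the smallest $e(n)$ that works'') and makes transparent that the constant $32=2^5$ is precisely what absorbs the five-step exponent shift from $e-3$ to $e+2$, as you remark at the end. Both proofs extract the upper bound $e(n)\le n-m^*-1$ and the disjointness of the $S_n$ from the same half of the hypothesis ($r_n<2^{2^{n-m-3}}$) in essentially the same way.
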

\begin{proof} For each $i$, let $e(i)$ be such that
 $2^{2^{e(i)-3}}\leq r_{i}<2^{2^{e(i)-2}}$.
  Note that such $e(i)$ satisfy $e(i)\geq 1$, because $r_{i}\geq 2$.
 By the assumptions,
 $r_{i}<2^{2^{i-j-3}}$ for all $i\in Y$, $j\in Y\cup \{0\}$. Observe
 then that  $e(i)-3< i-j-3$ for all $j<i, j\in Y\cup \{0\}$,
 therefore $e(i)< i-j$, hence $e(i)\leq i-j-1$. This implies
 $e(i)\leq i-1$ and $i-e(i)-1>j-1$. Therefore, sets
 $S(n)=\{n-e(n)-1, n-1\}$ are disjoint for all $n\in Y$.

 We will now show that $r_{n}\leq 2^{t(n)}$, where
  $t(n)=2^{e(i)-1}-3n-4-\sum_{k\in Y, k<n}2^{e(k)+2}.$
  Since $r_{n}<2^{2^{e(n)-2}}$, it suffices to show that
   $2^{e(n)-2}\leq 2^{e(n)-1}-3n-4-\sum_{k\in Y, k<n}2^{e(k)+2}$.
   Hence, it suffices to prove that $$3n+4+\sum_{k\in Y, k<n}2^{e(k)+2}\leq 2^{e(n)-2}.$$
  Since $r_{n}<2^{2^{e(n)-2}}$, it suffices to show that $2^{3n+4+\sum_{k\in Y, k<n}2^{e(k)+2}}\leq r_{n}$.

  Observe first that $\prod_{i<n, i\in Y}2^{2^{e(i)+2}}\leq  \prod_{i<n, i\in Y}r_{i}^{32}$, because by the definition of $e(i)$,  $2^{2^{e(i)+2}}={(2^{2^{e(i)-3}})}^{32}\leq r_{i}^{32}$.
 Hence, it suffices to show that $2^{3n+4}\prod_{i<n, i\in Y}r_{i}^{32}\leq r_{n}$. This follows from the assumptions of Theorem 0.1.
   \end{proof}

{\bf Proof of Theorem 0.1} By Lemma $7.2$, we can find $e(i)$
satisfying the assumptions of Theorem $7.1$, and by the thesis of
 Theorem $7.1$ we get the desired result. The last part of the thesis follows from the Small-Warfield theorem \cite{sw}, which says that prime affine algebras with linear growth are finite dimensional modules over its center.
\\

\medskip

\noindent {\bf Acknowledgements.} The author is grateful to the referees for their close reading of the
 paper and helpful suggestions, to Jason Bell for showing her how to
 prove Lemma 2.4. and to Tom Lenagan and Michael West for many suggestions which helped to improve the paper.

\bigskip

\vspace{3ex}

\begin{minipage}{1.00\linewidth}

\noindent
 Agata Smoktunowicz: \\
Maxwell Institute for Mathematical Sciences\\
School of Mathematics, University of Edinburgh,\\
James Clerk Maxwell Building, King's Buildings, Mayfield Road,\\
Edinburgh EH9 3JZ, Scotland, UK\\
\end{minipage}
\end{document}